\title{Projected Stochastic Gradients for Convex Constrained Problems in Hilbert Spaces}
\author{Caroline Geiersbach\thanks{University of Vienna, Austria (\texttt{caroline.geiersbach@univie.ac.at})} \and Georg Pflug\thanks{University of Vienna, Austria (\texttt{georg.pflug@univie.ac.at})}}
\newcommand{\R}{\mathbb{R}}
\newcommand{\N}{\mathbb{N}}
\newcommand{\E}{\mathbb{E}}
\newcommand{\pP}{\mathbb{P}}
\newcommand{\D}{\text{ d}}
\newcommand{\U}{\mathcal{U}_{ad}}
\newcommand{\tu}{\tilde{u}}
\DeclareMathOperator*{\argmin}{arg\,min}
\DeclareMathOperator*{\esssup}{ess\,sup}
\numberwithin{equation}{section}
\newtheorem{thm}{Theorem}[section]
\newtheorem{lemma}[thm]{Lemma}
\newtheorem{prop}[thm]{Proposition}
\newtheorem{assumption}[thm]{Assumption}
\newtheorem{cor}[thm]{Corollary}
\theoremstyle{definition}
\newtheorem{rem}[thm]{Remark}
\begin{document}
\maketitle
\begin{abstract}
Convergence of a projected stochastic gradient algorithm is demonstrated for convex objective functionals with convex constraint sets in Hilbert spaces. In the convex case, the sequence of iterates ${u_n}$ converges weakly to a point in the set of minimizers with probability one. In the strongly convex case, the sequence converges strongly to the unique optimum with probability one. An application to a class of PDE constrained problems with a convex objective, convex constraint and random elliptic PDE constraints is shown. Theoretical results are demonstrated numerically.
\end{abstract}

\section{Introduction}
We consider problems of the form
\begin{equation}\label{eq:SAproblem}
 \min_{u \in C} \{j(u) = \E[J(u,\xi)] \},
\end{equation}
where $C$ is a nonempty, closed and convex subset of a Hilbert space $H$. The random vector $\xi:\Omega \rightarrow \R^m$ is defined on a probability space $(\Omega, \mathcal{F}, \pP)$; it is assumed that for every $\omega$, $u \mapsto J(u,\xi(\omega))$ is convex on $C$, making $j$ convex as well. Additionally, we require that $J$ is $L^2$-Fr\'echet differentiable\footnote{$J(u,\xi(\omega)):H \times \Omega \rightarrow \R$ is $L^2$-Fr\'echet differentiable if for an open $U \subset H$ there exists a bounded and linear random operator $A:U \times \Omega \rightarrow \R$ such that $\lim_{h \rightarrow 0} \lVert J(u + h, \xi) - J(u, \xi) + A(u,\xi)h \rVert_{L^2(\Omega)} / \lVert h \rVert_H = 0$, where $L^2(\Omega)$ is the space of square integrable functions on $(\Omega, \mathcal{F}, \pP).$}
with respect to $u$ on an open neighborhood of $C$, which implies that $j:H \rightarrow \R$ is Fr\'echet differentiable. We assume
\begin{equation}\label{eq:expectation}
\E[J(u,\xi)] = \int_\Omega J(u,\xi(\omega)) \D\pP (\omega)
\end{equation}
is well-defined and finite for each $u \in C.$ Unless the support of $\pP$ is finite and small, the direct calculation of the integral \eqref{eq:expectation} is not tractable. A common approximation method for this integral involves sampling, where it is assumed that it is possible to generate a random independent identically distributed (i.i.d.)~sample $\xi_1, \dots, \xi_N$  with $\xi_i:=\xi(\omega_i)$ and $\omega_i \in \Omega$.  In a \textit{sample average approximation} (SAA) approach, the problem \eqref{eq:SAproblem} is replaced by an approximate problem
$$\min_{u \in C} \{\hat{j}_N(u) = \frac{1}{N} \sum_{i=1}^N J(u,\xi_i) \},$$
which is then solved as a proxy for the basic problem \eqref{eq:SAproblem}. Notice that in the SAA approach the number of samples is fixed a priori and the approximate problem does not contain any randomness so can be solved by any appropriate optimization software. For an overview on the SAA methods see the chapter ``Monte Carlo Sampling Methods'' in Shapiro \cite{Shapiro2003}.

In contrast, the {\em stochastic (quasi)-gradient} or {\em stochastic approximation} technique does not require the sample size to be determined a priori. The iterative optimization step  relies on the notion of a stochastic  gradient, i.e.~a random function $G(u,\xi)$ such that $\E[G(u,\xi)] \approx \nabla j(u)$.
A stopping criterion and the determination of confidence regions can be based on information gathered during the iteration, which gives an advantage over a-priori rules.
The stochastic approximation technique originated in a paper by Robbins and Monro in 1951 \cite{Robbins1951}, where the authors developed an iterative method for finding the root of a function where only noisy estimates of the function are available. A related work for finding the maximum of a regression function followed in a paper by Kiefer and Wolfowitz in 1952 \cite{Kiefer1952}.

In PDE constrained optimization, the use of stochastic approximation techniques is still unexploited.  It is the goal of this paper to establish convergence for convex problems in Hilbert spaces, and demonstrate its application on a particular class of problems, namely a convex problem with random elliptic PDE constraints and additional convex constraints. Variants of the model problem in this paper have been investigated in various works; approaches typically hinge on a finite-dimensional noise assumption introduced in \cite{Babuska2004}, which allows for a parametric representation of the random fields. Kouri et al.\,\cite{Kouri2014} used a parametric representation of random fields, as well as a trust-region algorithm with sparse grids. Hou, Lee and Manouzi \cite{Hou2011} relied on a Wiener-It\^o chaos expansion combined with a finite element approximation to deduce a deterministic system. Rosseel and Wells \cite{Rosseel2012} considered the problem where the control is also uncertain, and developed a one-shot approach, using a series expansion for the random field and comparing stochastic collocation to the stochastic Galerkin method.

The paper is structured as follows. In section~\ref{sec:PSG}, the projected stochastic gradient algorithm is defined, notation is introduced and existing convergence results are summarized. In section~\ref{sec:convergence}, convergence is proven. A model problem is introduced in section~\ref{sec:ModelProblem} and it is shown that the problem satisfies the conditions for convergence. In section~\ref{sec:Numerics}, the algorithm is demonstrated using numerical experiments. Closing remarks are prepared in section~\ref{sec:Conclusion}.

\section{Algorithm, Notation, and Existing Results}\label{sec:PSG}
We denote the inner product in $H$ as $\langle \cdot, \cdot \rangle$ and induced norm $\lVert \cdot \rVert = \sqrt{\langle \cdot, \cdot \rangle}$. We introduce the notation $u_n \rightarrow u$ for the strong convergence and $u_n \rightharpoonup u$ for weak convergence in $H$. The projection onto a closed convex set $C$ is denoted by $\pi_C:H \rightarrow C$ and is defined as the function such that
$$ \pi_C(u) = \underset{w \in C}{\argmin} \,  \lVert u-w \rVert.$$
The projected stochastic gradient (PSG) algorithm, which is studied in this paper, is summarized in Algorithm~\ref{alg:PSGD_Hilbert}.
\begin{algorithm}
\begin{algorithmic}[1] 
\State \textbf{Initialization:} $u_1 \in H$
\For{$n=1,2,\dots$}
\State Generate $\xi_n$, independent from $\xi_1, \dots, \xi_{n-1}$, and $\tau_n >0$
\State $u_{n+1} := \pi_{C}(u_n - {\tau_n}G(u_n,\xi_n))$
\EndFor
\end{algorithmic}
\caption{Projected Stochastic Gradient (PSG)}
\label{alg:PSGD_Hilbert}
 \end{algorithm}

A natural choice for a stochastic gradient is $G(u,\xi) = \nabla_u J(u,\xi)$, but the gradient can even be chosen to allow for some bias; see section~\ref{sec:convergence}. Iterates $u_{n}$ are a function of the history $(\xi_1, \dots, \xi_{n-1})$ and are therefore random. The strength of this method is the low memory requirement -- only the current iterate $u_n$ needs to be stored to compute the next step -- and its easy adaptability to deterministic gradient-based solvers. Its performance is however quite sensitive to a proper choice of step size, and the projection onto $C$ might be as complex as the original problem \eqref{eq:SAproblem}. In the deterministic case, it is possible take larger steps by using step sizes that guarantee descent, i.e.~ensuring $j(\pi_C[u_n - \tau_n \nabla j(u_n)]) \leq j(u_n)$; see for instance \cite{Hinze2009} for a projected Armijo rule. In the stochastic case, exogenous step size rules of the form
 \begin{equation}\label{eq:RobbinsMonroStepSize}
\tau_n \geq 0, \quad \sum_{n=1}^\infty \tau_n = \infty, \quad  \sum_{n=1}^\infty \tau_n^2 < \infty
 \end{equation}
are a common requirement to ensure convergence. For this reason, Algorithm \ref{alg:PSGD_Hilbert} is not a descent method. To terminate the algorithm, one relies on efficiency estimates, which are summarized in subsection~\ref{subsec:stepsizerules}.

Convergence of the stochastic gradient algorithm is well-established in finite-dimensional spaces. For unconstrainted problems (i.e.~$C = \R^d$), Bertsekas and Tsitsiklis \cite{Bertsekas2000} proved that, with a Lipschitz continuous gradient and step sizes diminishing to zero, $\lim_{n \rightarrow \infty}j(u_n) = -\infty$ or $j(u_n)$ converges to a finite value and $\lim_{n \rightarrow \infty} \nabla j(u_n) =0$ a.s. Convergence of the projected stochastic gradient method in the presence of zero-mean noise and systematic error was handled e.g.~by Pflug \cite{Pflug1996}, but also by many other authors; see for instance the work by Kushner and Yin \cite{Kushner2003}.

In Hilbert spaces, most results for constrained convex optimization are in the deterministic or nonsmooth setting. Poljak \cite{Poljak1967} proved that the sequence $\{u_{n} \}$ contains a minimizing subsequence $\{u_{n_k} \}$ such that $j(u_{n_k}) = \inf_{u \in C} j(u)$ with iterations of the form $u_{n+1}=\pi_C(u_n+v_n),$ where $v_n$ is a support functional of $j$ and subject to the rule $\lim_{n \rightarrow \infty} \lVert v_n \rVert= 0$ and $\sum_{n=0}^\infty \lVert v_n \rVert = \infty$. For constrained nonsmooth convex optimization, Alber, Iusem, and Solodov \cite{Alber1998} studied nonsmooth convex optimization and proved weak convergence of the generated sequence to a minimizer if the problem has solutions, and unboundedness of the sequence otherwise. Bello Cruz and de Oliveira \cite{Cruz2016} established weak convergence of the generated sequence to a minimizer in the case of a convex, G\^{a}teaux differentiable objective function, and presented a modified projected gradient method where strong convergence of the sequence can be proven.

Some papers have treated infinite dimensional stochastic approximation; of note are \cite{Venter1966}, \cite{Nixdorf1984}, \cite{Chen2002}.
Goldstein \cite{Goldstein1988} proved almost sure convergence to the minimum in the unconstrained case where $j$ achieves a unique mimimum. Yin and Zhu \cite{Yin1990} studied processes of the form $u_{n+1}=u_n + \tau_n(f(u_n) + w_n)$ for correlated noise $\{w_n \}$ sequences and nonlinear operators $f$. In particular, almost sure convergence was demonstrated even if $u$ does not satisfy the linear growth condition $|f(u)| \leq K(1+ \lVert u\rVert)$. Barty, Roy and Strugarek \cite{Barty2005} proved strong convergence of $u_{n+1} = \pi_C(u_n + \tau_n(v_n + w_n))$ in the case where $j$ is strongly convex, and proved that in the general convex case, $j(u_n) \rightarrow j(\bar{u})$ for an accumulation point $\bar{u}$ of the sequence. An anonymous reviewer brought to our attention the paper by Culioli and Cohen \cite{Culioli1990}, where a convergence result similar to ours was produced. Our result, however, includes a bias term and also shows convergence of the sequence $\{ u_n\}$ to a specific random point in the solution set, thus precluding the possibility of oscillations within the set of solutions.

We emphasize the following features of our analysis that makes it different from existing above results:
\begin{itemize}
 \item Almost sure weak convergence of the entire sequence $\{ u_n\}$ to a specific point in the solution set is established as long as a solution exists.
 \item All that is needed to establish convergence is convexity of $j$ and at most quadratic growth for the second moment of the stochastic gradient over the constraint set $C$. No assumption about Lipschitz continuity of the gradient is needed.
 \item Efficiency estimates are derived for the case of an unbounded constraint set $C$.
\end{itemize}
To our knowledge, the application to PDE constrained optimization under uncertainty is also novel.

\section{Convergence Result}
\label{sec:convergence}
In this section, we prove convergence of Algorithm~\ref{alg:PSGD_Hilbert} for general convex problems in Hilbert spaces. The proof relies on the use of martingale methods as in \cite{Pflug1996} for the finite-dimensional case. We recall that given a probability space $(\Omega, \mathcal{F}, \pP)$, a (discrete) filtration $\{ \mathcal{F}_n\} \subset \mathcal{F}$ is an increasing sequence of $\sigma$-algebras. A stochastic process $\{\beta_n\}$ is said to be adapted to a filtration $\{ \mathcal{F}_n \}$ iff $\beta_n$ is $\mathcal{F}_n$-measurable for all $n \in \N$. The natural filtration is the filtration generated by the sequence $\beta_n$ itself and is given by $\mathcal{F}_n = \sigma(\beta_m: m \leq n)$. If for an event $F \in \mathcal{F}$, it holds that $\pP[F] = 1$, we say $F$ occurs almost surely and denote this with a.s.

To proceed, we recall some technical results.

\begin{lemma}[Robbins-Siegmund] \label{lemma:Robbins-Siegmund}
Let $\{\mathcal{F}_n\}$ be an increasing sequence of $\sigma$-algebras and 
$v_n$, $a_n$, $b_n$, $c_n$ nonnegative random variables adapted to $\mathcal{F}_n.$ If
\begin{equation}\label{eq:Robbins-Siegmund}
\E[v_{n+1} | \mathcal{F}_n] \leq v_n(1+a_n)+ b_n-c_n,
\end{equation}
and $\sum_{n=1}^\infty a_n < \infty,  \sum_{n=1}^\infty b_n < \infty$ a.s., then with probability one, $\{v_n\}$ is convergent and it holds that $\sum_{n=1}^\infty c_n < \infty$. 
\end{lemma}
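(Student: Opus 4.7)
The plan is to absorb the multiplicative factor $(1+a_n)$ through a deterministic-looking normalization and then construct a nonnegative supermartingale whose a.s.\ convergence yields both claims at once. Set $\alpha_n = \prod_{k=1}^{n-1}(1+a_k)$, with $\alpha_1 = 1$. Because $a_n \geq 0$ and $\sum_n a_n < \infty$ a.s., the partial products form an increasing sequence converging to a finite (random) limit $\alpha_\infty \in [1,\infty)$; in particular $1 \leq \alpha_n \leq \alpha_\infty < \infty$ a.s.

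Next I rescale. Put $w_n = v_n/\alpha_n$ and divide the hypothesis \eqref{eq:Robbins-Siegmund} by $\alpha_{n+1} = \alpha_n(1+a_n)$ to obtain
\begin{equation*}
\E[w_{n+1} \mid \mathcal{F}_n] \leq w_n + \frac{b_n}{\alpha_{n+1}} - \frac{c_n}{\alpha_{n+1}}.
\end{equation*}
With $B_n = \sum_{k=1}^{n-1} b_k/\alpha_{k+1}$ and $C_n = \sum_{k=1}^{n-1} c_k/\alpha_{k+1}$, define $M_n := w_n - B_n + C_n$. A direct substitution gives $\E[M_{n+1}\mid \mathcal{F}_n] \leq M_n$, so $\{M_n\}$ is an $\{\mathcal{F}_n\}$-supermartingale. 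It need not be nonnegative, but since $w_n, C_n \geq 0$ and $B_n \leq \sum_{k=1}^\infty b_k =: B_\infty < \infty$ a.s., one has $M_n \geq -B_\infty$, so $M_n + B_\infty$ is a nonnegative supermartingale. Doob's supermartingale convergence theorem then delivers $M_n \to M_\infty$ a.s.\ for some finite $M_\infty$.

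From here the two conclusions drop out together. Since $B_n$ is monotone and bounded by $B_\infty$, $B_n$ converges a.s., whence $w_n + C_n$ converges a.s.\ to a finite limit. But $C_n$ is nondecreasing and $w_n \geq 0$, so $C_n \to \infty$ would force $w_n \to -\infty$, which is impossible; therefore $C_n$ converges and $w_n$ converges, both a.s. Multiplying back by $\alpha_n \to \alpha_\infty$ yields convergence of $v_n = \alpha_n w_n$, and
\begin{equation*}
\sum_{k=1}^\infty c_k \leq \alpha_\infty \sum_{k=1}^\infty \frac{c_k}{\alpha_{k+1}} = \alpha_\infty \lim_{n\to\infty} C_n < \infty \quad \text{a.s.}
\end{equation*}

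The main obstacle I expect is the integrability required to apply the supermartingale convergence theorem: $\E|M_n|$ must be finite, which in particular means $\E v_n < \infty$ for each $n$. The hypothesis is usually understood to carry this implicitly (since $\E[v_{n+1}\mid\mathcal{F}_n]$ already appears in \eqref{eq:Robbins-Siegmund}), but in a borderline case one proceeds by localization: introduce stopping times $T_N = \inf\{n : v_n + B_n > N\}$, apply the argument to the stopped process $M_{n \wedge T_N}$, and then let $N \to \infty$ using that $\{T_N = \infty\}$ has probability tending to $1$ on the event where $v_n$ and $B_n$ stay bounded, which is exactly the event where the desired convergence is to take place.
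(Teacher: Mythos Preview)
The paper does not actually prove this lemma; it simply cites \cite{Pflug1996}, Appendix~L. So there is no in-paper argument to compare against, and what you have written is in fact the classical Robbins--Siegmund proof (normalize by $\alpha_n=\prod_{k<n}(1+a_k)$, build an almost-supermartingale, and read off both conclusions from its convergence).

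One point deserves tightening. You write that ``$M_n + B_\infty$ is a nonnegative supermartingale.'' Nonnegativity is fine, but $B_\infty=\sum_{k\ge 1} b_k/\alpha_{k+1}$ is \emph{not} $\mathcal{F}_n$-measurable, so adding it does not preserve the supermartingale property: in general $\E[M_{n+1}+B_\infty\mid\mathcal{F}_n]=\E[M_{n+1}\mid\mathcal{F}_n]+\E[B_\infty\mid\mathcal{F}_n]\not\le M_n+B_\infty$. What you really need is either (i) $\sup_n \E[M_n^-]<\infty$, which you cannot get directly since only $\sum b_n<\infty$ a.s.\ (not in $L^1$) is assumed, or (ii) the localization you already sketch. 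The cleanest stopping time is
\[
\sigma_N=\inf\Big\{n:\ \sum_{k=1}^{n}(a_k+b_k)>N\Big\},
\]
which is $\{\mathcal F_n\}$-adapted and, on $\{n<\sigma_N\}$, forces $\alpha_{n+1}\le e^N$ and $B_{n+1}\le N$; the stopped process $M_{n\wedge\sigma_N}$ is then a genuine supermartingale bounded below by $-N$, hence a.s.\ convergent, and letting $N\to\infty$ (using $\pP(\sigma_N=\infty)\to 1$) gives the result. Your choice $T_N=\inf\{n:v_n+B_n>N\}$ is workable too, but note that it does not by itself bound $v_{T_N}$ or the increments at the stopping step, so a line of care is needed there. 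With this adjustment your argument is complete and is exactly the proof one finds in the cited reference.
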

\begin{proof}
\cite{Pflug1996}, Appendix L.
\end{proof}

The following proposition is a generalization of Proposition 2 in \cite{Alber1998}.
\begin{prop}\label{prop:technicalproposition}
Let $\{\tau_n \}$ be a nonnegative deterministic sequence and $\{ \beta_n\}$ a nonnegative random sequence in $\mathbb{R}$ adapted to $\{\mathcal{F}_n$\}. Assume that
$\sum_{n=1}^\infty \tau_n = \infty$ and  $\mathbb{E} [\sum_{n=1}^\infty \tau_n \beta_n] < \infty$. Moreover assume that 
$\beta_{n} - \mathbb{E}[\beta_{n+1}|\mathcal{F}_n] \leq \gamma \tau_n$ a.s.~for all $n$ and some $\gamma > 0$. Then
$$\beta_n  \hbox{ converges to } 0  \hbox{ a.s. }$$
\end{prop}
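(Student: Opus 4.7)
Since $\E[\sum_n \tau_n \beta_n] < \infty$, one has $\sum_n \tau_n \beta_n < \infty$ a.s., and together with $\sum_n \tau_n = \infty$ this already forces $\liminf_n \beta_n = 0$ a.s.\ (if $\beta_n \geq c > 0$ eventually on a set of positive measure, the weighted sum would diverge there). The plan is to upgrade liminf to lim via a down-crossing argument that uses the drift hypothesis $\E[\beta_{n+1}|\mathcal{F}_n] \geq \beta_n - \gamma \tau_n$ in the equivalent form that $X_n := \beta_n + \gamma \sum_{j<n}\tau_j$ is a nonnegative submartingale; the intuition is that each descent of $\beta$ from above $2\epsilon$ to below $\epsilon$ must consume a definite amount of $\tau$-weight.

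Fix $\epsilon > 0$ and suppose, toward a contradiction, that $\pP(\limsup_n \beta_n > 2\epsilon) > 0$. Define the nested stopping times $\mu_1 := \inf\{n : \beta_n > 2\epsilon\}$, $\sigma_k := \inf\{n > \mu_k : \beta_n \leq \epsilon\}$, and $\mu_{k+1} := \inf\{n > \sigma_k : \beta_n > 2\epsilon\}$. Applying the liminf result above to the shifted process $(\beta_n)_{n \geq \mu_k}$ shows that $\{\mu_k < \infty, \sigma_k = \infty\}$ has probability zero. Applying the submartingale optional-stopping inequality to $X_n$ at the bounded times $\mu_k \wedge N \leq \sigma_k \wedge N$ and rearranging gives, for every $N$,
\begin{equation*}
\E[\beta_{\mu_k}\mathbf{1}_{\{\mu_k\leq N\}}] \leq \E[\beta_{\sigma_k}\mathbf{1}_{\{\sigma_k\leq N\}}] + \E[\beta_N \mathbf{1}_{\{\mu_k\leq N <\sigma_k\}}] + \gamma\,\E\Big[\sum_{j=\mu_k}^{\sigma_k\wedge N-1}\tau_j\,\mathbf{1}_{\{\mu_k\leq N\}}\Big].
\end{equation*}
The first, second, and fourth terms converge by monotone convergence to their values with $N$ replaced by $\infty$. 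For the third, since $\sum_N \tau_N\,\E[\beta_N\mathbf{1}_{\{\mu_k\leq N<\sigma_k\}}] \leq \E[\sum_n \tau_n\beta_n] < \infty$ while $\sum_N \tau_N = \infty$, the same liminf argument yields $\liminf_N \E[\beta_N \mathbf{1}_{\{\mu_k \leq N < \sigma_k\}}] = 0$. Passing to such a subsequence and using $\beta_{\mu_k} > 2\epsilon$, $\beta_{\sigma_k} \leq \epsilon$ gives the key estimate
\begin{equation*}
\gamma\,\E\Big[\sum_{j=\mu_k}^{\sigma_k-1}\tau_j\,\mathbf{1}_{\{\mu_k<\infty\}}\Big] \geq \epsilon\,\pP(\mu_k<\infty).
\end{equation*}

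Since the intervals $[\mu_k,\sigma_k)$ are disjoint and $\beta_n > \epsilon$ on each,
\begin{equation*}
\infty > \E\Big[\sum_n\tau_n\beta_n\Big] \geq \epsilon\sum_k\E\Big[\sum_{j=\mu_k}^{\sigma_k-1}\tau_j\,\mathbf{1}_{\{\mu_k<\infty\}}\Big] \geq \frac{\epsilon^2}{\gamma}\sum_k\pP(\mu_k<\infty),
\end{equation*}
forcing $\pP(\mu_k<\infty)\to 0$, hence $\pP(\mu_k<\infty\text{ for every }k) = 0$ by monotonicity. So a.s.\ only finitely many up-crossings of the band $(\epsilon,2\epsilon)$ occur, giving $\limsup_n\beta_n \leq 2\epsilon$ a.s.; letting $\epsilon$ run through $\{1/m : m \in \N\}$ delivers $\beta_n \to 0$ a.s. The main technical obstacle I anticipate is the optional-stopping step itself: since $\sigma_k$ is only a.s.\ finite and $\beta_n$ carries no a priori moment bound, the usual theorem does not apply directly, and the ``overflow'' term $\E[\beta_N\mathbf{1}_{\{\mu_k\leq N<\sigma_k\}}]$ must be disposed of using the integrability hypothesis $\E[\sum_n\tau_n\beta_n]<\infty$ rather than by uniform integrability of $\beta_n$.
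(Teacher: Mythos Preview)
Your proof is correct and follows essentially the same architecture as the paper's: same $\liminf$ argument, same up/down-crossing stopping times, same use of the drift bound to control the expected drop over an excursion by $\gamma$ times the $\tau$-weight consumed, and same use of $\beta_i>\epsilon$ on each excursion to deduce that the total excursion $\tau$-weight has finite expectation.

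The execution differs in two minor respects. First, you package the drift condition as ``$X_n=\beta_n+\gamma\sum_{j<n}\tau_j$ is a submartingale'' and invoke optional stopping at $\mu_k\wedge N\le\sigma_k\wedge N$, which forces you to introduce and dispose of the overflow term $\E[\beta_N\mathbf 1_{\{\mu_k\le N<\sigma_k\}}]$. The paper instead writes the telescoping sum $\E[\beta_{m_k}-\beta_{\ell_k}]=\E\big[\sum_i \mathds 1_{\{m_k\le i<\ell_k\}}(\beta_i-\E[\beta_{i+1}\mid\mathcal F_i])\big]\le\gamma\,\E\big[\sum_{i=m_k}^{\ell_k-1}\tau_i\big]$ directly via the tower property, which bypasses the truncation at $N$ and the overflow term entirely. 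Second, for the contradiction you sum your key estimate over $k$ to obtain $\sum_k\pP(\mu_k<\infty)<\infty$ and hence Borel--Cantelli; the paper instead fixes $\eta=\pP(\limsup_n\beta_n>2\epsilon)$, observes that $\E\big[\sum_{i=m_k}^{\ell_k-1}\tau_i\big]\to 0$ as $k\to\infty$, and extracts a contradiction from $\epsilon\eta\le\gamma\,\E\big[\sum_{i=m_k}^{\ell_k-1}\tau_i\big]$ for a single large $k$. Both routes are valid; the paper's is marginally shorter because it never has to introduce or kill the overflow term.
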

\begin{proof}
The assumptions imply that $\liminf_n \beta_n = 0$ a.s. Indeed, if this were not the case, with positive probability, for some $\epsilon>0$ there would exist $N_\epsilon$ such that $\beta_n \geq \epsilon/2$ for all $n\geq N_\epsilon$ and thus $\sum_{n=N_\epsilon}^\infty \beta_n \tau_n \geq \epsilon/2 \sum_{n=N_\epsilon}^\infty \tau_n = \infty$, a contradiction to $\E[\sum_{n=1}^\infty \tau_n \beta_n] < \infty$. One has to also show that $\limsup_n \beta_n = 0$ a.s., which we likewise argue by contradiction. Suppose that there exists an $\epsilon > 0$ such that 
\begin{equation}
\label{eq:lim-sup-probability}
\pP\{ \limsup_n \beta_n > 2 \epsilon \} = \eta > 0.
\end{equation}
Define the following stopping times
\begin{eqnarray*}
m_1 &=& \inf \{n : \beta_n > 2\epsilon \}\\
\ell_k &=& \inf \{n> m_k : \beta_n < \epsilon \}; k\ge 1.\\
m_{k+1} &=& \inf \{n>\ell_k : \beta_n > 2 \epsilon \}
\end{eqnarray*}
The assumption \eqref{eq:lim-sup-probability} implies that with probability at least $\eta$, infinitely many stopping times are smaller than infinity, since we already established $\pP\{\liminf_n \beta_n = 0\} =1$. On the set where $m_k=\infty$, $\sum_{i=m_k}^{\ell_k-1} \tau_i = 0$ and $\mathds{1}_{i \geq m_k} = 0$ for all $i$. Notice that if $m_k \leq i \leq \ell_k - 1$, it holds that $\beta_i \geq \epsilon$, so
\begin{equation*}
\mathbb{E}\big[ \sum_{k=1}^\infty \sum_{i=m_k}^{\ell_k-1} \tau_i \big] \le  \mathbb{E} \big[ \sum_{k=1}^\infty \sum_{i=m_k}^{\ell_k-1} \tau_i \beta_i/\epsilon \big] \le \frac{1}{\epsilon}
 \sum_{k=1}^\infty \mathbb{E} \big[ \sum_{i=m_k}^{\ell_k-1} \tau_i \beta_i \big] < \infty.
 \end{equation*}
so in particular
\begin{equation}
\label{eq:boundedness-steps}
\lim_{k \rightarrow \infty} \mathbb{E}\big[ \sum_{i=m_k}^{\ell_k-1} \tau_i \big] = 0.
\end{equation}
Now
\begin{eqnarray*}
\epsilon \eta &\le& \mathbb{E} [\beta_{m_k}- \beta_{\ell_k} ] = 
\mathbb{E} \big[\sum_{i=m_k}^{\ell_k-1} \beta_i - \beta_{i+1} \big] = \mathbb{E}\big[\sum_{i=1}^\infty \mathds{1}_{m_k \leq i < \ell_k} \cdot (\beta_i - \beta_{i+1}) \big]\\
&=& \mathbb{E}\big[\sum_{i=1}^\infty  \mathbb{E}[\mathds{1}_{m_k \leq i < \ell_k} \cdot (\beta_i - \beta_{i+1}) |\mathcal{F}_i] \big] = \mathbb{E}\big[\sum_{i=1}^\infty  \mathds{1}_{m_k \leq i < \ell_k} \cdot (\beta_i - \mathbb{E}[\beta_{i+1} |\mathcal{F}_i]) \big]\\
&\le& \mathbb{E}\big[\sum_{i=1}^\infty  \mathds{1}_{m_k \leq i < \ell_k} \cdot \gamma \, \tau_i  \big] = \gamma \, \mathbb{E} \big[ \sum_{i=m_k}^{\ell_k-1} \tau_i \big],
\end{eqnarray*}
where the third equality follows by the law of total expectation; the fourth equality follows since $\mathds{1}_{m_k \leq i < \ell_k}$ and $\beta_i$ are $\mathcal{F}_i$-measurable; and the second inequality follows from the assumption $\beta_{n} - \mathbb{E}[\beta_{n+1}|\mathcal{F}_n] \leq \gamma \tau_n$. Choosing $k$ so large that  \mbox{$\mathbb{E}[\sum_{i=m_k}^{\ell_k-1} \tau_i] < \epsilon \eta /\gamma$}, which is possible by \eqref{eq:boundedness-steps}, one gets a contradiction.
\end{proof}

For convergence of the projected stochastic gradient method, we need the following assumptions on the objective function and the stochastic gradients.

\begin{assumption}\label{assumption:j}
The functions $u \mapsto J(u,\xi(\omega))$ are convex for almost all $\omega \in \Omega$ and $J$ is $L^2$-Fr\'echet differentiable for all $u$ in an open neighborhood of $C$.
\end{assumption}

\begin{assumption}\label{assumption:gradient}
Let $\{\mathcal{F}_n \}$ be an increasing sequence of $\sigma$-algebras and the  sequence of stochastic gradients generated by Algorithm~\ref{alg:PSGD_Hilbert} be given by $\{G(u_n,\xi_n)\}$.  For each $n$, there exist $r_n$, $w_n$ with
$$r_n = \E[G(u_n,\xi_n) | \mathcal{F}_n] - \nabla j(u_n), \quad w_n = G(u_n, \xi_n) - \E[G(u_n, \xi_n) | \mathcal{F}_n],$$ 
which satisfy the following assumptions: (i) $ u_n$ and $r_n $ are $\mathcal{F}_n$-measurable; (ii) for $K_n:=\esssup_{\omega \in \Omega} \lVert r_n(\omega)\rVert$ it holds that \mbox{$ \sum_{n=1}^\infty \tau_n K_n< \infty$} and \mbox{$\sup_n K_n < \infty$}; (iii) there exist $M_1, M_2 > 0$ such that $\E[\lVert G(u,\xi)\rVert^2 ] \leq M_1 + M_2 \lVert u \rVert^2$ for all $u \in C.$
\end{assumption}

\begin{rem}\label{rem:assumptionsinlemma}
 Assumption~\ref{assumption:gradient} requires that the stochastic gradient has the additive representation $G(u_n,\xi_n) = \nabla j(u_n) + w_n + r_n$. This allows for systematic error in the form of $r_n$, which must decay to zero at the given rate. Bias might be for example in the form of numerical error but also due to the approximation of the gradient. The sequence $\{w_n\}$ represents zero-mean random error and satisfies $\E[w_n | \mathcal{F}_n] = 0$ by definition. Requiring adaptivity of the sequences $\{ u_n\}$ and $\{r_n\}$ is automatically satisfied if $\{\mathcal{F}_n\}$ is chosen to be the natural filtration. The final assumption is a growth condition for the second moment of the stochastic gradient over the constraint set $C$.
\end{rem}

\begin{thm}\label{lemma:convergenceofPSG}
Suppose that Assumption~\ref{assumption:j} and Assumption~\ref{assumption:gradient} hold. If there exists a $\tu \in C$ such that $j(\tu) \leq j(u)$ for all $u \in C$, then for the set of solutions $S:= \{w \in C: j(w) = j(\tu) \}$ and Algorithm~\ref{alg:PSGD_Hilbert} with step sizes satisfying \eqref{eq:RobbinsMonroStepSize}, it holds that
\begin{enumerate}
 \item $\{\lVert u_n - u \rVert^2\}$ converges a.s.\,for all $u \in S$,
 \item $\{j(u_n) \}$ converges a.s.\,and $\lim_{n \rightarrow \infty} j(u_n) = j(\tu)$,
 \item $\{u_n \}$ weakly converges a.s.\,to some $\bar{u} \in S.$
\end{enumerate}
\end{thm}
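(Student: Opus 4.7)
The plan is to follow the martingale approach used in the finite-dimensional case of \cite{Pflug1996}, reducing parts 1 and 2 to Lemma~\ref{lemma:Robbins-Siegmund} and Proposition~\ref{prop:technicalproposition}, and closing part 3 with an Opial-type uniqueness argument.

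For part 1 I would fix $u\in S$, set $v_n=\lVert u_n-u\rVert^2$, and use nonexpansiveness of $\pi_C$ together with $G(u_n,\xi_n)=\nabla j(u_n)+r_n+w_n$ to write
\begin{equation*}
\lVert u_{n+1}-u\rVert^2 \le \lVert u_n-u\rVert^2 - 2\tau_n\langle G(u_n,\xi_n),u_n-u\rangle + \tau_n^2\lVert G(u_n,\xi_n)\rVert^2.
\end{equation*}
Taking $\E[\,\cdot\mid\mathcal{F}_n]$, using $\E[w_n\mid\mathcal{F}_n]=0$, convexity of $j$ (so $\langle\nabla j(u_n),u_n-u\rangle\ge j(u_n)-j(u)\ge 0$), the bias bound $2|\langle r_n,u_n-u\rangle|\le K_n(1+v_n)$, and $\E[\lVert G(u_n,\xi_n)\rVert^2\mid\mathcal{F}_n]\le M_1+2M_2\lVert u\rVert^2+2M_2 v_n$ puts the resulting inequality into Robbins--Siegmund form with $a_n=\tau_n K_n+2M_2\tau_n^2$, summable $b_n$, and $c_n=2\tau_n(j(u_n)-j(u))\ge 0$. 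Lemma~\ref{lemma:Robbins-Siegmund} then delivers convergence of $\{v_n\}$ a.s.\ and $\sum_n\tau_n(j(u_n)-j(\tu))<\infty$ a.s.; taking the same recursion in unconditional expectation also yields $\sup_n\E[v_n]<\infty$, hence $\sup_n\E[\lVert u_n\rVert^2]<\infty$, which part 2 needs.

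For part 2, summability combined with $\sum_n\tau_n=\infty$ forces $\liminf_n j(u_n)=j(\tu)$ a.s.; to upgrade this to a limit I would apply Proposition~\ref{prop:technicalproposition} with $\beta_n=j(u_n)-j(\tu)\ge 0$. The required one-sided drift bound
\begin{equation*}
j(u_n)-j(u_{n+1})\le\langle\nabla j(u_n),u_n-u_{n+1}\rangle\le\tau_n\lVert\nabla j(u_n)\rVert\,\lVert G(u_n,\xi_n)\rVert
\end{equation*}
follows from the subgradient inequality and nonexpansiveness of $\pi_C$ (using $u_n\in C$ for $n\ge 2$). Conditioning on $\mathcal{F}_n$ and combining $\lVert\nabla j(u_n)\rVert\le\E[\lVert G(u_n,\xi_n)\rVert\mid\mathcal{F}_n]+K_n$ with the growth bound and the uniform moment control just derived should give both $\beta_n-\E[\beta_{n+1}\mid\mathcal{F}_n]\le\gamma\tau_n$ and $\E[\sum_n\tau_n\beta_n]<\infty$, so Proposition~\ref{prop:technicalproposition} yields $j(u_n)\to j(\tu)$ a.s. The main technical obstacle sits here: the natural pathwise bound on $\lVert\nabla j(u_n)\rVert$ is random, so extracting a deterministic $\gamma$ requires careful use of $\sup_n\E[\lVert u_n\rVert^2]<\infty$, likely via a localization on $\{\sup_n\lVert u_n\rVert\le R\}$ with $R\uparrow\infty$.

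For part 3, boundedness of $\{u_n\}$ a.s.\ (from part 1) supplies a weak cluster point $\bar u$; weak closedness of $C$ (convex and closed) gives $\bar u\in C$, and weak lower semicontinuity of the convex continuous $j$ combined with part 2 yields $j(\bar u)\le\liminf_k j(u_{n_k})=j(\tu)$, so $\bar u\in S$. Picking a countable dense $\{s_k\}\subset S$ (using separability of $H$, valid in the PDE setting of interest), part 1 produces a single full-measure event on which $\lVert u_n-s_k\rVert^2$ converges for every $k$, and hence by approximation for every $s\in S$. Two weak cluster points $\bar u_1,\bar u_2\in S$ would then give a limit to $\lVert u_n-\bar u_1\rVert^2-\lVert u_n-\bar u_2\rVert^2=2\langle u_n,\bar u_2-\bar u_1\rangle+\lVert\bar u_1\rVert^2-\lVert\bar u_2\rVert^2$; evaluating along the two weakly convergent subsequences forces $\lVert\bar u_1-\bar u_2\rVert^2=0$, so $\{u_n\}$ converges weakly a.s.\ to a single $\bar u\in S$.
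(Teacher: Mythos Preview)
Your proposal is correct and tracks the paper's proof closely: the same Robbins--Siegmund reduction for part~1, the same application of Proposition~\ref{prop:technicalproposition} to $\beta_n=j(u_n)-j(\tu)$ for part~2, and the same Opial-type uniqueness argument for part~3.

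Two small points of comparison. For the localization obstacle you flag in part~2, the paper makes this precise via the stopping time $\sigma_M=\inf\{n:\lVert u_n\rVert>M\}$, freezing $\beta_n$ at $j(u_{\sigma_M})-j(u)$ for $n\ge\sigma_M$; this keeps $\{\beta_n\}$ adapted (which a raw restriction to the $\mathcal{F}_\infty$-event $\{\sup_n\lVert u_n\rVert\le R\}$ would not) so that Proposition~\ref{prop:technicalproposition} applies, and then passes the conclusion back to $j(u_n)-j(u)$ on $\{\sigma_M=\infty\}$, whose probability tends to~$1$ by the a.s.\ boundedness from part~1 (not the moment bound). In part~3 the paper argues pointwise in $\omega$ without the countable-dense-subset step; your use of separability is the more careful way to obtain a single full-measure event on which $\lVert u_n-s\rVert^2$ converges for \emph{every} $s\in S$, which is what the Opial argument really needs since the weak cluster points are random.
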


\begin{proof}
 For the first statement, let $u \in S$ be an arbitrary element in the solution set and let $g_n = G(u_n,\xi_n)$. Since $u \in C$, $\pi_C(u) = u.$ Thus using the nonexpansivity of the projection operator,
 \begin{equation}\label{eq:boundsequence}
 \begin{aligned}
\lVert u_{n+1} - u \rVert^2 &= \lVert \pi_C(u_n - \tau_n g_n) - \pi_C(u) \rVert^2 \\
&\leq  \lVert u_n - \tau_n g_n - u \rVert^2\\
& = \lVert u_n - u \rVert^2 - 2 \tau_n \langle u_n - u, g_n \rangle + \tau_n^2 \lVert g_n \rVert^2.
\end{aligned}
 \end{equation}
Since $\xi_n$ is independent from $\xi_1, \dots, \xi_{n-1}$, it follows that
\begin{equation}
\label{eq:gradient-martingale-to-mean}
 \E[\lVert g_n \rVert^2 | \mathcal{F}_n] = \E_\xi[\lVert G(u_n,\xi)\rVert^2],
\end{equation}
so 
\begin{equation}
 \label{eq:bound-variance-term}
 \E[\lVert g_n\rVert^2 | \mathcal{F}_n]  \leq M_1 + M_2 \lVert u_n \rVert^2 \leq M_3 + M_4 \lVert u_n - u \rVert^2
\end{equation}
for some constants $M_3$ and $M_4$, where we used  $\lVert u_n \rVert^2 \leq 2(\lVert u_n - u \rVert^2 + \lVert u \rVert^2)$. By assumption, $g_n = \nabla j(u_n) + w_n + r_n.$ Since $u_n$ and $ r_n$ are $\mathcal{F}_n$-measurable, it holds that \mbox{$\E[u_n |\mathcal{F}_n ] = u_n$} and $\E[r_n |\mathcal{F}_n ] = r_n$. Note as well that $\E[w_n|\mathcal{F}_n] = 0$ holds. Using \eqref{eq:boundsequence}, we therefore have
\begin{equation}\label{eq:finalinequality-partone}
\begin{aligned}
 &\E[\lVert u_{n+1} - u \rVert^2 | \mathcal{F}_n ] \\
 &\quad =\lVert u_n - u \rVert^2 - 2 \tau_n \E[\langle u_n - u, \nabla j(u_n) + w_n + r_n \rangle| \mathcal{F}_n] + \tau_n^2 \E[\lVert g_n \rVert^2 | \mathcal{F}_n]\\
 &\quad = \lVert u_n - u \rVert^2 - 2 \tau_n \langle u_n - u, \nabla j(u_n) + r_n \rangle + \tau_n^2 \E[\lVert g_n \rVert^2 | \mathcal{F}_n]\\
 &\quad \leq\lVert u_n - u \rVert^2 - 2 \tau_n (j(u_n) - j(u)) + 2\tau_n (\lVert u_n-u \rVert^2 + 1) \lVert r_n \rVert \\
 & \quad \qquad + \tau_n^2 (M_3 + M_4 \lVert u_n - u \rVert^2)\\
 &\quad = \lVert u_n - u \rVert^2(1+ 2 \tau_n \lVert r_n \rVert +\tau_n^2 M_4 ) - 2 \tau_n (j(u_n) - j(u)) + 2 \tau_n \lVert r_n \rVert + \tau_n^2 M_3 
\end{aligned}
\end{equation}
where in the inequality, we used convexity of $j$, the inequality \eqref{eq:bound-variance-term}, and the relation \mbox{$- 2\tau_n \langle u_n - u, r_n \rangle \leq 2 \tau_n (\lVert u_n - u \rVert^2 + 1) \lVert r_n \rVert $}. With
\begin{align*}
a_n &= 2 \tau_n \lVert r_n \rVert + \tau_n^2 M_4,\\
b_n & = 2 \tau_n \lVert r_n \rVert+ \tau_n^2 M_3, \\
c_n &= 2 \tau_n (j(u_n) - j(u)),
\end{align*}
observe that by Assumption~\ref{assumption:gradient}, $\sum_{n=1}^\infty a_n < \infty$ and $\sum_{n=1}^\infty b_n < \infty$ a.s. Clearly, $\{a_n\}$ and $\{b_n\}$ are nonnegative; $\{c_n\}$ is nonnegative by the fact that $u \in S$. Therefore by Lemma~\ref{lemma:Robbins-Siegmund}, the sequence $\{\lVert u_n - u \rVert^2\}$ converges a.s. Since $u \in S$ was arbitrary, the sequence must converge a.s.\,for all $u \in S$.

Now we show the second statement by verifying the conditions of Proposition~\ref{prop:technicalproposition}. By Lemma~\ref{lemma:Robbins-Siegmund}, with probability one it holds that
 \begin{equation}
 \label{eq:Robbins-Siegmund-result}
 \sum_{n=1}^\infty \tau_n (j(u_n) - j(u)) < \infty.
\end{equation}
For \eqref{eq:Robbins-Siegmund-result}, we just need that $\sum_{n=1}^\infty \tau_n \|r_n\| < \infty$ a.s. But we have assumed that $\sum_{n=1}^\infty \tau_n K_n < \infty$ and this may lead to a stronger result: taking the expectation on both sides of inequality (\ref{eq:finalinequality-partone}), and introducing $e_n:=\mathbb{E}[\| u_{n}-u\|^2]$, we get
\begin{align}
\label{eq:recursion}
e_{n+1}&\le e_n (1+2  \tau_n K_n + \tau_n^2 M_4 ) - 2 \mathbb{E}[\tau_n (j(u_n) - j(u))] + 2 \tau_n K_n + \tau_n^2 M_3,
\end{align}
from which we get using the deterministic version of Lemma~\ref{lemma:Robbins-Siegmund} that
\begin{equation*}
\mathbb{E}\big[ \sum_{n=1}^\infty \tau_n (j(u_n) - j(u)) \big] < \infty.
\end{equation*}
By convexity of $j$ in the first inequality, followed by the Cauchy-Schwarz inequality, and nonexpansivity of the projection operator in the third inequality,
 \begin{equation}\label{eq:convexity-inequality}
\begin{aligned}
  j(u_{n}) - j(u_{n+1}) & \leq \langle \nabla j(u_{n}), u_{n} - u_{n+1} \rangle \\
  & \leq \lVert \nabla j(u_{n}) \rVert \lVert u_{n+1} - u_n \rVert\\
  & = \lVert \nabla j(u_{n}) \rVert \lVert \pi_C(u_n - \tau_n g_n) - \pi_C(u_n) \rVert \\
  & \leq \lVert \nabla j(u_{n}) \rVert \tau_n \lVert  g_n \rVert.\\
\end{aligned}
 \end{equation}
Notice that 
$\nabla j(u_n) = \E[g_n|\mathcal{F}_n] - r_n = \E_\xi[G(u_n,\xi)] - r_n.$ Hence with probability one,
\begin{equation}
\label{eq:bounds-on-gradient-inequality}
\lVert \nabla j(u_n) \rVert \leq  \E_\xi[\lVert G(u_n,\xi) \rVert] + K_n \leq \sqrt{M_1} + \sqrt{M_2} \lVert u_n \rVert + K_n,
\end{equation}
since by Jensen's inequality, $\E_\xi[\lVert G(u_n,\xi) \rVert] \leq \sqrt{M_1 + M_2 \lVert u_n \rVert^2} \leq \sqrt{M_1} + \sqrt{M_2} \lVert u_n \rVert.$
Let $\sigma_M$ be the stopping time $\sigma_M = \inf \lbrace n: \lVert u_n \rVert > M \rbrace$ for $M \in \N$. On the set $\{\sigma_M > n\}$ we assign
$$\beta_{n}=j(u_n) - j(u),$$
while on $\{ \sigma_M \le n\}$ we assign 
$$\beta_{\sigma_M} = j(u_{\sigma_M})-j(u), \quad \beta_{\sigma_M+n}= \beta_{\sigma_M}, \quad n \geq 1.$$
Notice that if $\|u_n\| \le M$, then there exists by  \eqref{eq:bound-variance-term} and \eqref{eq:bounds-on-gradient-inequality} a $M^\prime$ such that
\begin{equation}\label{xyz}
\|\nabla j(u_n)\| \cdot {\E [ \|g_n\| | \mathcal{F}_n]} \le M^\prime(M^\prime + K_n).
\end{equation}
Now
$$\beta_n - \beta_{n+1} = \mathds{1}_{\sigma_M > n} (j(u_n) - j(u_{n+1}))$$
and therefore, taking the conditional  expectation on both sides, noticing that $\mathds{1}_{\sigma_M > n}$ is $\mathcal{F}_n$-measurable,  and considering \eqref{eq:convexity-inequality} and \eqref{xyz}, we get 
$$\beta_n - \E[ \beta_{n+1} | \mathcal{F}_n]  \leq M^\prime (M^\prime + K_n) \tau_n.$$
According to Proposition~\ref{prop:technicalproposition}, $\beta_n$ converges to 0 on the set $B_M :=\{\sigma_M = \infty\}$ and on this set, $\beta_n$ coincides with $j(u_n) - j(u)$.
Since $\|u_n - u\|$ converges a.s., $\|u_n\|$ is bounded in probability and therefore the probability of the set $B_M$ can be made arbitrarily close to 1 by choosing $M$ large. Since $\pP(\bigcup_{M=1}^\infty B_M) = 1$, we may infer that $j(u_n)-j(u)$ converges to 0 almost surely.

For the third statement, since $\{\lVert u_n - u \rVert^2\}$ converges a.s.~for all $u \in S$ by 1., it is bounded in probability, so there exists a weak accumulation point $\bar{u}$ of the sequence $\{ u_n\}$. The point $\bar{u}$ is random in general and in the following we argue pointwise for almost all $\omega \in \Omega$. Let $\{u_{n_k} \}$ be a subsequence of $\{ u_n\}$ such that $u_{n_k} \rightharpoonup \bar{u}$. Since $j$ is convex and continuous, it is weakly lower semicontinuous; cf.~\cite[p.~37]{Troeltzsch2009},
$$j(\bar{u}) \leq \lim_{k \rightarrow \infty} j(u_{n_k})  = j(\tu).$$
In particular, $\bar{u} \in S$. Since $\bar{u}$ was an arbitrary weak accumulation point, all weak accumulation points must belong to $S$. To show uniqueness, let $u_1, u_2 \in S$ be two distinct weak limits of $\{ u_n\}$, i.e. $u_{n_k} \rightharpoonup u_1$ and $u_{n_l} \rightharpoonup u_2$ and $u_1 \neq u_2.$ Then
\begin{align}
 \lVert u_{n_k} - u_2 \rVert^2 &= \lVert u_{n_k} - u_1 \rVert^2 +  \lVert u_1 - u_2 \rVert^2 + 2 \langle u_{n_k} - u_1, u_1 - u_2 \rangle,\label{eq:first-equality-part3}\\
\lVert u_{n_l} - u_1 \rVert^2 &=  \lVert u_{n_l} - u_2 \rVert^2 +  \lVert u_2 - u_1 \rVert^2 + 2  \langle u_{n_l} - u_2, u_2 - u_1 \rangle,\label{eq:second-equality-part3}
\end{align}
so by weak convergence of each subsequence, we combine \eqref{eq:first-equality-part3} and \eqref{eq:second-equality-part3} to obtain
\begin{align}
\lim_{k \rightarrow \infty} \lVert u_{n_k} - u_2 \rVert^2 - \lVert u_{n_k} - u_1 \rVert^2 = \lVert u_1 - u_2 \rVert^2,\label{eq:combination-equalities-part3-1}\\
 \lim_{l \rightarrow \infty} \lVert u_{n_l} - u_1 \rVert^2 - \lVert u_{n_l} - u_2 \rVert^2 = \lVert u_1 - u_2 \rVert^2. \label{eq:combination-equalities-part3-2}
\end{align}
By a.s.~convergence of the sequence $\{\lVert u_n - u \rVert^2\}$ for all $u \in S$, the limit of each subsequence is equal to the limit of the entire sequence with probability one, so $\lim_{k \rightarrow \infty} \lVert u_{n_k} - u_1 \rVert^2 = \lim_{n \rightarrow \infty}  \lVert u_{n} - u_1 \rVert^2 =: l_1$ and similarly $ \lim_{k \rightarrow \infty} \lVert u_{n_k} - u_2 \rVert^2 = \lim_{n \rightarrow \infty}  \lVert u_{n} - u_2 \rVert^2 =: l_2.$ 
Therefore \eqref{eq:combination-equalities-part3-1} and  \eqref{eq:combination-equalities-part3-2} imply
$$l_2 - l_1 = \lVert u_1 - u_2 \rVert^2 = l_1 - l_2,$$
meaning $\lVert u_1 - u_2 \rVert^2 = 0$ and thus the weak limits coincide. Therefore $\{ u_n\}$ is weakly convergent to a unique limit with probability one.
\end{proof}

We note that when $j$ is strongly convex, it is possible to establish almost sure strong convergence.
\begin{cor}\label{lemma:convergence:stronglyconvexcase}
With the same assumptions as in Lemma~\ref{lemma:convergenceofPSG}, assume that $j$ is additionally strongly convex. Then $\{u_n\}$ converges a.s.~to a unique minimum $\bar{u}$.
\end{cor}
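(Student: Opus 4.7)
The plan is to upgrade the weak convergence from Theorem~\ref{lemma:convergenceofPSG}(3) to strong convergence by exploiting a quadratic growth estimate that strong convexity provides at the minimizer. The main ingredient is the convergence of function values $j(u_n) \to j(\tu)$ already given by Theorem~\ref{lemma:convergenceofPSG}(2); strong convexity then translates this scalar convergence into norm convergence.

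First, I would record that strong convexity of $j$ forces the solution set $S$ to be a singleton $\{\bar u\}$, so the random weak limit guaranteed by Theorem~\ref{lemma:convergenceofPSG}(3) is automatically this unique minimizer. Next, I would derive the quadratic growth inequality
$$ j(u) - j(\bar u) \;\ge\; \tfrac{\mu}{2}\,\lVert u - \bar u\rVert^2 \qquad \text{for all } u \in C, $$
where $\mu>0$ is the modulus of strong convexity. This follows by combining the strong-convexity gradient inequality
$$ j(u) \;\ge\; j(\bar u) + \langle \nabla j(\bar u),\, u - \bar u\rangle + \tfrac{\mu}{2}\lVert u - \bar u\rVert^2 $$
(valid under Assumption~\ref{assumption:j}, which gives Fréchet differentiability of $j$) with the first-order optimality condition $\langle \nabla j(\bar u),\, u - \bar u\rangle \ge 0$ for all $u \in C$, which holds because $\bar u$ minimizes $j$ over the closed convex set $C$.

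Finally, since the iterates $u_n$ remain in $C$ by construction, applying the quadratic growth bound and invoking Theorem~\ref{lemma:convergenceofPSG}(2) gives
$$ \lVert u_n - \bar u\rVert^2 \;\le\; \tfrac{2}{\mu}\bigl( j(u_n) - j(\bar u) \bigr) \longrightarrow 0 \quad \text{a.s.,} $$
which is the desired strong convergence.

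I do not expect any genuine obstacle: once uniqueness of the minimizer and the quadratic growth property are in place, strong convergence is immediate from the function-value convergence established in the main theorem. The only point requiring a little care is to justify the variational inequality for $\nabla j(\bar u)$, which is standard for constrained minimizers of differentiable convex functions on closed convex sets in a Hilbert space.
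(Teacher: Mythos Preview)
Your proposal is correct and follows essentially the same route as the paper: uniqueness of the minimizer from strong convexity, the inequality $j(u_n)-j(\bar u)\ge \langle \nabla j(\bar u),u_n-\bar u\rangle+\tfrac{\mu}{2}\lVert u_n-\bar u\rVert^2$, the variational inequality $\langle \nabla j(\bar u),u_n-\bar u\rangle\ge 0$, and then the function-value convergence from part~2 of Theorem~\ref{lemma:convergenceofPSG}. The paper's proof is slightly terser but the ingredients and logic are identical.
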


\begin{proof}
By strong convexity, $j$ has a unique minimum $\bar{u}$, so $S = \{ \bar{u}\}$. By strong convexity, there exists a $\mu > 0$ such that
\begin{equation}
\label{eq:mu-strong-convexity}
j(u_n) - j(\bar{u}) \geq \langle \nabla j(\bar{u}), u_n -\bar{u} \rangle + \frac{\mu}{2} \lVert u_n - \bar{u} \rVert^2 \end{equation}
($j$ is $\mu$-strongly convex). Since $\langle \nabla j(\bar{u}), u_n - \bar{u} \rangle \geq 0$ by optimality of $\bar{u}$, $\lim_{n \rightarrow \infty} j(u_n) - j(\bar{u}) =0$ a.s.\,implies $\lim_{n \rightarrow \infty} \lVert u_n - \bar{u} \rVert = 0$ a.s.
\end{proof}

\subsection{Robust Step Size Rules and Efficiency}\label{subsec:stepsizerules}
Performance of Algorithm \ref{alg:PSGD_Hilbert} is dependent on an appropriate step size rule satisfying \eqref{eq:RobbinsMonroStepSize}. Here, we generalize appropriate choices as discussed in Nemirovski et al.~\cite{Nemirovski2009} to the case where $C$ may not be bounded.  For simplicity, we will observe the case where $g_n$ is unbiased, i.e. $r_n = 0$ for all $n$ and note that where bias is present, George and Powell \cite{George2006} have developed step size rules that minimize estimation error. 

Let $\bar{u}$ be an optimal solution of \eqref{eq:SAproblem} and set  $e_n = \E[\lVert u_n - \bar{u} \rVert^2]$.
If $j$ is $\mu$-strongly convex, \eqref{eq:boundsequence} implies using the inequality \mbox{$\E[\langle u_n - \bar{u}, \nabla j(u_n) \rangle] \geq \mu \E[\lVert u_n - \bar{u} \rVert^2]$} that
\begin{align*}
e_{n+1} &\leq e_n - 2 \tau_n \E[\langle u_n - \bar{u}, g_n \rangle] + \tau_n^2 \E[\lVert g_n \rVert^2]\\
& \leq e_n (1 - 2  \mu\tau_n)+ \tau_n^2 (M_1 + M_2 \E[\lVert u_n \rVert^2])\\
& \leq e_n (1-2 \mu\tau_n +2\tau_n^2 M_2) + \tau_n^2 (M_1 + 2M_2 \lVert \bar{u} \rVert^2),  
 \end{align*}
where we also used Assumption~\ref{assumption:gradient} and $\lVert u_n \rVert^2 \leq 2\lVert u_n - \bar{u} \rVert^2 + 2\lVert \bar{u} \rVert^2.$ 
Note that for a recursion of the form $ e_{n+1} \leq e_n (1 - c_1/(n+\nu) +c_2/(n+\nu)^2)+c_3/(n+\nu)^2$, where $e_1, c_2, c_3 \geq 0$ and $c_1 > 1$, it holds that 
$$e_n \leq \frac{K}{n+\nu},$$ 
where $K:=(c_3 +e_1 c_2)/(c_1 -1)$ and $\nu +1= (c_3 + e_1 c_2)/(e_1(c_1-1)),$
which can be proven by induction; see Lemma~\ref{lemma:recursion-statement} in the Appendix. Therefore with the step size rule
\begin{equation}\label{eq:stepsizerule-stronglyconvex}
\tau_n = \frac{\theta}{n+\nu}
\end{equation}
we get with $e_1=\lVert u_1-\bar{u}\rVert$, $c_1 = 2 \mu \theta$, $c_2 = 2 \theta^2 M_2$, $c_3 = \theta^2(M_1 + 2 M_2 \lVert \bar{u} \rVert^2)$ the following efficiency estimate:
\begin{equation}\label{eq:iterationerrorbounds}
 \E[\lVert u_n - \bar{u} \rVert] \leq \sqrt{\frac{K}{n+\nu}}.
\end{equation}
If we additionally have that $\nabla j(u)$ is Lipschitz continuous with constant $L>0$ and $\bar{u}$ is an interior point of the admissible set $C$, then it holds that
$$j(u_n) \leq j(\bar{u}) + \frac{L}{2} \lVert u_n - \bar{u} \rVert^2,$$
so the expected error can also be bounded as follows:
\begin{equation}\label{eq:objectiveerrorbounds}
\E[j(u_n)-j(\bar{u})] \leq \frac{LK}{2(n+\nu)}. 
\end{equation}
We observe that our estimates have the same order as those of \cite{Nemirovski2009} where $M_2 = 0$ (no growth term) and $M_1 = M^2$ (uniform bound over $C$). Note that $\lVert \bar{u} \rVert$, which is generally unknown in the constant $K$, can be further estimated by $\lVert \bar{u} \rVert \leq \lVert \bar{u} - u_1 \rVert + \lVert u_1 \rVert.$ Finally, we remark that the step size rule \eqref{eq:stepsizerule-stronglyconvex} depends on a good estimate of the parameter $\mu.$

In the general convex case, or where a good estimate for $\mu$ does not exist, step sizes of the form $\tau_n = \theta/(n+\nu)$ may be too small for efficient convergence. An idea is to use averaging of iterates to suppress noise, combined with larger step sizes, which was developed in \cite{Polyak1992}. From \eqref{eq:boundsequence} we get
\begin{equation}
\label{eq:recursion-convex}
\begin{aligned}
e_{n+1} &\leq e_n - 2 \tau_n \E[\langle u_n - \bar{u}, g_n\rangle] + \tau_n^2 \E[\lVert g_n \rVert^2]\\
& \leq e_n(1+2\tau_n^2 M_2) - 2\tau_n \E[j(u_n)-j(\bar{u})] + \tau_n^2 (M_1 + 2M_2 \lVert \bar{u}\rVert^2).
\end{aligned}
 \end{equation}  
Rearranging \eqref{eq:recursion-convex} and summing over $1 \leq i \leq N$ on both sides,
\begin{equation}
\begin{aligned}
\label{eq:estimate-jerror-convex}
\sum_{n=i}^N \tau_n \E[j(u_n)-j(\bar{u})] &\leq \sum_{n=i}^N \frac{e_n}{2} (1+ 2 \tau_n^2 M_2) - \frac{e_{n+1}}{2} + \frac{\tau_n^2 M_1 }{2} +  \tau_n^2 M_2\lVert \bar{u}\rVert^2\\ 
&\leq \frac{e_i}{2} + \frac{1}{2} \sum_{n=i}^N   \tau_n^2 (2M_2 e_n + {M_1} +  2M_2  \lVert \bar{u} \rVert^2).
\end{aligned}
\end{equation}
We define $\gamma_n:=\tau_n/(\sum_{l=i}^N \tau_l)$ and the average of the iterates $i$ to $N$ as
\begin{equation*}\label{eq:averagediterates}
\tilde{u}_i^N = \sum_{n=i}^N \gamma_n u_n.
\end{equation*}
By convexity of $j$, we have $j(\tu_i^N) \leq \sum_{n=i}^N \gamma_n j(u_n)$ so by \eqref{eq:estimate-jerror-convex}
\begin{equation}
\label{eq:basic-efficiency-estimate-convex}
 \E[j(\tu_i^N)-j(\bar{u})] \leq \frac{e_i + \sum_{n=i}^N \tau_n^2 (2M_2 e_n + {M_1} +  2M_2  \lVert \bar{u} \rVert^2)}{2 \sum_{n=i}^N \tau_n}.
\end{equation}
We first summarize from \cite{Nemirovski2009} the case where the stochastic gradient is uniformly bounded over a bounded set $C$, i.e.~there exists a $M>0$ such that $\E[\lVert G(u,\xi) \rVert^2] \leq M$ for all $u \in C$. Then \eqref{eq:basic-efficiency-estimate-convex} reduces to 
$$\E[j(\tu_i^N)-j(\bar{u})] \leq \frac{e_i + M \sum_{n=i}^N \tau_n^2  }{2 \sum_{n=i}^N \tau_n}.$$
In the case where $C$ is bounded, using $D_C:=\sup_{u \in C} \lVert u - u_1\rVert$, it follows that $e_1 \leq D_C^2$ and $e_i\leq 4 D_C^2$ for $1<i\leq N$. With the constant stepsize policy for a fixed number of iterations $N$ and $n=1, \dots, N$,
\begin{equation}\label{eq:stepsizerule-convex-fixed}
 \tau_n = \frac{D_C}{\sqrt{M N}}
\end{equation}
we get after plugging \eqref{eq:stepsizerule-convex-fixed} into \eqref{eq:estimate-jerror-convex2} the efficiency estimate
\begin{equation}
\quad \E[j(\tilde{u}_1^N) - j(\bar{u})] \leq \frac{D_C \sqrt{M}}{\sqrt{N}}.
\end{equation}
Alternatively, one can work with the (nonconstant) step size policy for a constant $\theta > 0$
\begin{equation}\label{eq:stepsizerule-convex}
 \tau_n = \frac{\theta D_C}{\sqrt{M n}}.
\end{equation}
Then for $k=\lceil r N \rceil$ and a fixed $r \in (0,1)$, we get the efficiency estimate 
\begin{equation}\label{eq:efficiency-convexcase}
 \quad \E[j(\tilde{u}_k^N) - j(\bar{u})] \leq C(r) \max\{\theta, \theta^{-1}\}\frac{D_C \sqrt{M}}{\sqrt{N}},
\end{equation}
where $C(r)$ is a constant depending on $r$.

In the case where $C$ is generally unbounded, we need to obtain efficiency estimates in a different way. For this, we assume that at least the solution set is bounded, and denote by $D_S$ a constant depending on the solution set $S$ such that
$$\sup_{u \in S} \|u_1 - u\| \le D_S.$$
Then from \eqref{eq:recursion-convex}, we get for $n > 1$
\begin{align*}
e_{n+1} &\le e_1 \prod_{k=1}^n (1 + 2 \tau_k^2 M_2) + \sum_{k=1}^n \tau_k^2 (M_1 + 2 M_2 \lVert \bar{u} \rVert^2) \prod_{l=k+1}^n (1+ 2\tau^2_l M_2) =:Q_n
\end{align*}
for some $Q_n$ depending on $M_1, M_2$, $D_S$, and $\tau_k$, for $k=1, \dots, n.$ We consider the step sizes $\tau_n = \theta/n^{\gamma}$ for $1/2 < \gamma < 1$. Notice that then $Q:=\sup_{n} Q_n < \infty$,
$$\sum_{k=1}^N k^{-2\gamma} \leq \frac{2\gamma}{2 \gamma -1},$$
and
$$\sum_{k=1}^N k^{-\gamma} \geq \frac{1}{1-\gamma} (N+1)^{1-\gamma}.$$
Using these estimates we get from \eqref{eq:basic-efficiency-estimate-convex} and $R:=2M_2 Q + M_1 +2 M_2 \lVert \bar{u} \rVert^2$ that
\begin{equation}
\label{eq:estimate-jerror-convex2}
\E[j(\tu_1^N)-j(\bar{u})] \leq \frac{(1-\gamma) D_S^2 + (2 R \theta^2 \gamma (1-\gamma))/(2\gamma-1)}{2 \theta (N+1)^{1-\gamma}} .
\end{equation}
Notice that the speed of convergence comes close to the order $N^{-1/2}$ (as in the bounded case) if $\gamma$ is chosen close to $1/2$.

\section{Application to PDE Constrained Optimization under Uncertainty}\label{sec:ModelProblem}
We now will demonstrate application of Algorithm~\ref{alg:PSGD_Hilbert} to a model problem, the optimal control of a stationary heat source, which is subject to uncertain material parameters. Proofs, where omitted, are to be found in the Appendix. In the following, the inner product between two vectors $v, w \in \R^d$ is denoted by $v \cdot w = \sum_{i=1}^d v_i w_i$. For a function $f:\R^d \rightarrow \R$, let $\nabla f(x) = ({\partial f(x)}/{\partial x_1}, \dots, {\partial f(x)}/{\partial x_d})^\top$ denote the gradient and for $g: \R^d \rightarrow \R^d$, let $\nabla \cdot g(x) = {\partial g_1(x)}/{\partial x_1} + \cdots + {\partial g_d(x)}/{\partial x_d}$ denote the divergence. We define the Sobolev space $H^1(D)$ = \{$u\in L^2(D)$: ${\partial u}/{\partial x_i} \in L^2(D)$, $i =1, \dots, d$\} and the closure of $C_0^\infty(D)$ in $H^1(D)$ by $H_0^1(D)$.  The space $H_0^1(D)$ is a Hilbert space with inner product defined as $\langle f,g\rangle_{H_0^1(D)} = \int_D f(x) g(x) \D x + \int_D \nabla f(x) \cdot \nabla g(x) \D x.$ We also use the notation $|f|_{H_0^1(D)}^2 := \int_D |\nabla f(x)|^2 \D x$ for the $H_0^1(D)$-seminorm. 

Let $D \subset \R^d$ for $d=2,3$ be a bounded Lipschitz domain\footnote{A Lipschitz domain $D$ requires that for every point $x \in \partial D$, there exists a neighborhood in the boundary $\partial D$ that can be expressed as the graph of a Lipschitz-continuous function; see \cite[p.21]{Troeltzsch2009} for a technical definition. Polygonal domains in $\R^2$ and polyhedra in $\R^3$ automatically satisfy this assumption.}. Denote the probability space with $(\Omega, \mathcal{F}, \mathbb{P})$ and let $a: D \times \Omega \rightarrow \R$ be a random field representing conductivity on the domain. A realization of the field $a$ is denoted by $a(\cdot,\omega)$ for $\omega \in \Omega$. Temperature $y$ is a random function controlled by the deterministic source density $u$. The factor $\lambda$ is a measure of the energy costs related to the control $u$. The goal is to find a $u$ with corresponding $y$ that, in expectation, best approximates a deterministic target temperature $y_D$ with minimal cost. Mathematically, the problem is given by
\begin{equation}\label{eq:stationaryheatsourceproblemrandom}
 \begin{aligned}
   \min_{u \in \U} \quad \Big\lbrace j(u):= \E[ J(u, \omega) ] &:= \E \left[\frac{1}{2} \lVert y  - y_D\rVert_{L^2(D)}^2 \right] + \frac{\lambda}{2} \lVert u \rVert_{L^2(D)}^2 \Big\rbrace
   \\
   \text{s.t.} \quad -   \nabla \cdot  (a(x,\omega) \nabla y(x,\omega)) &= u(x), \qquad (x,\omega) \in D \times \Omega, \\
   y(x,\omega) &= 0, \phantom{tex}\qquad (x,\omega) \in \partial D \times \Omega,\\
    \U:= \{ u \in L^2(D):  & \,u_a(x)  \leq u(x) \leq u_b(x)\,\,\text{ a.e. } x\in D\}.
 \end{aligned}
\end{equation}
The admissible set $\U$ is clearly nonempty, bounded, convex, and closed. Additionally, $j(u)$ is convex by linearity of the mapping $T_\omega: u \mapsto y$; see Lemma~\ref{lemma:existenceuniquenessoptimalcontrol-stochastic}. Randomness in the conductivity is assumed to be finite in the sense that there exist $a_{\min}, a_{\max}$ such that for all $(x,\omega) \in D \times \Omega,$
\begin{equation}\label{eq:boundsona}
0 < a_{\min} < a(x,\omega) < a_{\max} < \infty.\end{equation}
Such restrictions can be weakened to allow for log-normal random fields; see \cite{Lord2014}. We recall properties of the weak solutions to the PDE constraint in \eqref{eq:stationaryheatsourceproblemrandom}. 

\begin{lemma}\label{lemma:poissonrandom}
Let $u \in L^2(D)$ and $a(\cdot,\omega)$ satisfy \eqref{eq:boundsona} for all $x \in D$. Then there exists a unique $y(\cdot, \omega) \in H_0^1(D)$ that satisfies
\begin{equation}\label{eq:Poissonrandom}
\int_D a(x,\omega) \nabla y(x,\omega) \cdot \nabla v(x) \D x = \int_D u(x) v(x) \D x \quad \forall v \in H_0^1(D).
\end{equation}
Moreover, there exists a constant $C_1 > 0$ such that
\begin{equation} \label{eq:Aprioribounds_Poisson_stochastic}
 \lVert y(\cdot, \omega)\rVert_{L^2(D)} \leq C_1 \lVert u \rVert_{L^2(D)}.
\end{equation}
\end{lemma}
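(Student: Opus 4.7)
The plan is to view \eqref{eq:Poissonrandom} as a parametrized variational equation in the deterministic Hilbert space $H_0^1(D)$ for each fixed $\omega$, and apply the Lax--Milgram theorem pointwise in $\omega$. Accordingly, I would define the bilinear form $b_\omega(y,v) := \int_D a(x,\omega)\,\nabla y(x) \cdot \nabla v(x)\,\D x$ on $H_0^1(D) \times H_0^1(D)$ and the linear functional $\ell_u(v) := \int_D u(x) v(x)\,\D x$, and verify the three hypotheses of Lax--Milgram.

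First I would check boundedness of $b_\omega$: by \eqref{eq:boundsona} and Cauchy--Schwarz, $|b_\omega(y,v)| \le a_{\max} |y|_{H_0^1(D)} |v|_{H_0^1(D)}$. Coercivity follows directly from the lower bound: $b_\omega(v,v) \ge a_{\min} |v|_{H_0^1(D)}^2$, and the Poincar\'e inequality (valid since $D$ is a bounded Lipschitz domain) converts the seminorm into an equivalent full norm on $H_0^1(D)$, so the form is $H_0^1$-coercive with constant independent of $\omega$. For $\ell_u$, Cauchy--Schwarz together with Poincar\'e gives $|\ell_u(v)| \le \lVert u \rVert_{L^2(D)} \lVert v \rVert_{L^2(D)} \le C_P \lVert u \rVert_{L^2(D)} |v|_{H_0^1(D)}$, where $C_P$ is the Poincar\'e constant for $D$. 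Lax--Milgram then yields existence and uniqueness of $y(\cdot,\omega) \in H_0^1(D)$ solving \eqref{eq:Poissonrandom}.

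For the a priori bound \eqref{eq:Aprioribounds_Poisson_stochastic}, I would test \eqref{eq:Poissonrandom} with $v = y(\cdot,\omega)$ to obtain
\begin{equation*}
a_{\min} |y(\cdot,\omega)|_{H_0^1(D)}^2 \le b_\omega(y,y) = \int_D u(x)\,y(x,\omega)\,\D x \le C_P \lVert u \rVert_{L^2(D)}\, |y(\cdot,\omega)|_{H_0^1(D)},
\end{equation*}
so $|y(\cdot,\omega)|_{H_0^1(D)} \le (C_P / a_{\min}) \lVert u \rVert_{L^2(D)}$. One more application of Poincar\'e then gives $\lVert y(\cdot,\omega) \rVert_{L^2(D)} \le C_P |y(\cdot,\omega)|_{H_0^1(D)} \le C_1 \lVert u \rVert_{L^2(D)}$ with $C_1 := C_P^2 / a_{\min}$, which is independent of $\omega$ thanks to the uniform ellipticity \eqref{eq:boundsona}.

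There is no real obstacle here: the only subtlety worth flagging is that $\omega$ enters only as a parameter in the coefficients, so measurability issues in $\omega$ are not part of the statement; what matters is that the Lax--Milgram constants depend on $a(\cdot,\omega)$ only through $a_{\min}$ and $a_{\max}$, which yields a uniform-in-$\omega$ constant $C_1$ in \eqref{eq:Aprioribounds_Poisson_stochastic}.
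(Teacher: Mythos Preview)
Your proposal is correct and essentially identical to the paper's proof: both fix $\omega$, verify boundedness and coercivity of $b_\omega$ via \eqref{eq:boundsona} and Poincar\'e, apply Lax--Milgram, and derive the a priori bound by testing with $v=y(\cdot,\omega)$ to obtain the same constant $C_1 = C_P^2/a_{\min}$. The only cosmetic difference is that the paper bounds $\lVert y\rVert_{L^2}^2$ directly through the chain $\lVert y\rVert_{L^2}^2 \le C_p^2 |y|_{H_0^1}^2 \le (C_p^2/a_{\min}) b_\omega(y,y)$, whereas you first isolate $|y|_{H_0^1}$ and then apply Poincar\'e once more---the arithmetic is the same.
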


Existence and uniqueness of problem \eqref{eq:stationaryheatsourceproblemrandom} for the deterministic case was shown in \cite{Troeltzsch2009}. For the random case, \cite{Hou2011} already presented a proof of existence for the unconstrained case. We present a proof for the constrained case and show uniqueness if $\lambda = 0$.

\begin{lemma}\label{lemma:existenceuniquenessoptimalcontrol-stochastic}
Let $u \in L^2(D)$, $a(\cdot, \cdot)$ satisfy \eqref{eq:boundsona} for all $(x,\omega) \in D \times \Omega$, and $y_D \in L^2(D)$.
Assume $\lambda \geq 0,$ then there exists a solution $\bar{u}$ to \eqref{eq:stationaryheatsourceproblemrandom}. If $\lambda >0$, the solution is unique.
\end{lemma}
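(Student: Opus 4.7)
The plan is to use the direct method of the calculus of variations. I would first show that $j$ is convex and weakly lower semicontinuous on $L^2(D)$, then exploit the boundedness of $\U$ to extract a weakly convergent minimizing subsequence, and finally upgrade to strict convexity (and hence uniqueness) when $\lambda>0$.

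First, I would establish the structural properties of $j$. For each fixed $\omega\in\Omega$, Lemma~\ref{lemma:poissonrandom} tells us that $u\mapsto y(\cdot,\omega)=:T_\omega u$ is a well-defined linear operator $L^2(D)\to H_0^1(D)\hookrightarrow L^2(D)$ with the uniform bound $\|T_\omega u\|_{L^2(D)}\le C_1 \|u\|_{L^2(D)}$ (the constant $C_1$ comes from the ellipticity bound $a_{\min}$ in \eqref{eq:boundsona} and is independent of $\omega$). Hence for each $\omega$, $u\mapsto \tfrac12\|T_\omega u - y_D\|_{L^2(D)}^2$ is convex and continuous on $L^2(D)$. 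Integrating over $\Omega$ preserves convexity, and the integrand is dominated by $(C_1\|u\|_{L^2}+\|y_D\|_{L^2})^2$, which is finite, so $j$ is finite-valued on $L^2(D)$. Adding the convex, continuous term $\tfrac{\lambda}{2}\|u\|_{L^2}^2$ (with $\lambda\ge 0$) keeps $j$ convex and continuous on $L^2(D)$. A convex, continuous functional on a Hilbert space is weakly lower semicontinuous (see e.g.\ \cite[p.~37]{Troeltzsch2009}).

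Next, I would invoke compactness. Since $u_a,u_b\in L^2(D)$ with $u_a\le u_b$ a.e., $\U$ is nonempty, convex, closed, and bounded in $L^2(D)$. Let $\{u_n\}\subset\U$ be a minimizing sequence, i.e.\ $j(u_n)\downarrow \inf_{u\in\U}j(u)$. By boundedness there is a subsequence (not relabeled) with $u_n\rightharpoonup\bar u$ in $L^2(D)$; since $\U$ is convex and strongly closed, it is weakly closed, so $\bar u\in\U$. Weak lower semicontinuity of $j$ gives
\begin{equation*}
j(\bar u)\le \liminf_{n\to\infty} j(u_n)=\inf_{u\in\U} j(u),
\end{equation*}
so $\bar u$ is a minimizer. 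This proves existence for any $\lambda\ge 0$.

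For uniqueness when $\lambda>0$, the term $\tfrac{\lambda}{2}\|u\|_{L^2(D)}^2$ is strictly convex on $L^2(D)$, and the tracking term $u\mapsto \tfrac12\E[\|T_\omega u-y_D\|_{L^2(D)}^2]$ is convex by the argument above. Their sum $j$ is therefore strictly convex, which on the convex set $\U$ rules out two distinct minimizers in the standard way: if $\bar u_1\ne\bar u_2$ were both optimal, $j(\tfrac12\bar u_1+\tfrac12\bar u_2)<\tfrac12 j(\bar u_1)+\tfrac12 j(\bar u_2)=\inf_\U j$, a contradiction. The only step requiring real care is the finiteness/continuity of the expectation, which is why the $\omega$-uniform bound in \eqref{eq:Aprioribounds_Poisson_stochastic} is essential — without a $\omega$-independent operator norm for $T_\omega$ one could not pass the convexity through the integral nor obtain the dominating envelope for continuity. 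Everything else is routine application of the direct method.
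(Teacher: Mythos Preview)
Your proposal is correct and follows essentially the same route as the paper: the direct method via convexity of $u\mapsto J(u,\omega)$ (from linearity of $T_\omega$), weak lower semicontinuity of $j$, weak sequential compactness of $\U$, and strict convexity when $\lambda>0$ for uniqueness. If anything, you are slightly more careful than the paper in justifying finiteness and continuity of $j$ via the $\omega$-uniform bound \eqref{eq:Aprioribounds_Poisson_stochastic} before invoking weak lower semicontinuity.
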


\begin{proof}
 $j$ is bounded from below, since for all $u \in L^2(D)$, $J(u,\omega) \geq 0$ a.s. Therefore there exists an infimum
$$\bar{j} := \inf_{u \in \U} j(u) \geq 0.$$
For a minimizing sequence \mbox{$\{u_n \}\subset \U$} such that $\lim_{n \rightarrow \infty} j(u_n) = j(\bar{u})$, there exists a subsequence $\{u_{n_k} \}$ such that $u_{n_k} \rightharpoonup \bar{u}$, since sequences in $\U$, a convex, closed and bounded subset of $L^2(D)$, are weakly sequentially compact.

By \eqref{eq:Aprioribounds_Poisson_stochastic} and the assumptions on $y_D$ and $u$, $J(\cdot, \omega)$
is bounded for almost every $\omega \in \Omega$. The mapping $T_\omega: L^2(D) \rightarrow H_0^1(D), u \mapsto y$ for each $\omega \in \Omega$ is well-defined by Lemma \ref{lemma:poissonrandom}, and is clearly linear. Thus $J(u,\omega)= \tfrac{1}{2} \lVert T_\omega u - y_D \rVert_{L^2(D)}^2 + \tfrac{\lambda}{2} \lVert u \rVert_{L^2(D)}^2$ is convex in $u$. 
By monotonicity of the expectation operator, the function $j$ is convex, and therefore weakly lower-semicontinuous, i.e.
$$j(\bar{u}) \leq \liminf_{k \rightarrow \infty} j(u_{n_k})=\bar{j}.$$
Since $\bar{u} \in \U,$ $j(\bar{u})$ cannot be smaller than $\bar{j}$. Therefore $j(\bar{u})=\bar{j}.$

For uniqueness, we note that when $\lambda \neq 0$, $j$ is a strongly convex function and therefore strictly convex. If there were two optima $\bar{u} \neq \bar{v},$ then  $j(\tfrac{1}{2}(\bar{u}+\bar{v})) < \tfrac{1}{2}j(\bar{u})+\tfrac{1}{2}j(\bar{v}) = j(\bar{u}),$
which is a contradiction by optimality of $\bar{u}.$
\end{proof}

%

\begin{prop}\label{prop:gradient}
For $\omega \in \Omega$, the stochastic gradient $\nabla_u J(u,\omega)$ for problem \eqref{eq:stationaryheatsourceproblemrandom} is given by
$$\nabla_u J(u,\omega) = \lambda u - p(\cdot,\omega),$$
where $p(\cdot,\omega) \in H_0^1(D)$ solves the PDE
\begin{equation}\label{eq:Poissonrandom-adjoint}
\int_D   a(x,\omega)\nabla v(x) \cdot \nabla p(x,\omega) \D x = \int_D (y_D(x) - y(x,\omega)) v(x) \D x \quad \forall v \in H_0^1(D).
\end{equation}
\end{prop}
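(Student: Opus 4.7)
The plan is to exploit the fact that, for fixed $\omega$, the solution operator $T_\omega : L^2(D)\to H_0^1(D)$, $u\mapsto y$, is bounded and linear by Lemma~\ref{lemma:poissonrandom}. This makes $J(\cdot,\omega)$ a continuous quadratic functional on $L^2(D)$, so I can compute its $L^2$-gradient by a direct expansion followed by the standard adjoint trick to eliminate the PDE-sensitivity term.

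First, I would compute $D_u J(u,\omega)[h]$ for arbitrary $h\in L^2(D)$. Setting $z := T_\omega h\in H_0^1(D)$ and using $T_\omega(u+th) = y + tz$, a one-line expansion yields
\[
J(u+th,\omega) - J(u,\omega) = t\bigl(\langle y-y_D, z\rangle_{L^2(D)} + \lambda \langle u, h\rangle_{L^2(D)}\bigr) + \tfrac{t^2}{2}\bigl(\lVert z\rVert_{L^2(D)}^2 + \lambda \lVert h\rVert_{L^2(D)}^2\bigr).
\]
The $O(t^2)$ remainder is $O(\lVert h\rVert_{L^2(D)}^2)$ because of the bound \eqref{eq:Aprioribounds_Poisson_stochastic} applied to $z$, so the Gâteaux derivative is in fact the Fréchet derivative and equals $\langle y-y_D, z\rangle_{L^2(D)} + \lambda\langle u, h\rangle_{L^2(D)}$.

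Second, I would rewrite $\langle y-y_D, z\rangle_{L^2(D)}$ via the adjoint state. Linearity of the PDE implies that $z$ satisfies $\int_D a(x,\omega)\nabla z\cdot\nabla v\,\D x = \int_D h\,v\,\D x$ for all $v\in H_0^1(D)$. Taking $v=p(\cdot,\omega)$ there, and $v=z$ in \eqref{eq:Poissonrandom-adjoint}, both equalities share the common left-hand side $\int_D a\nabla z\cdot\nabla p\,\D x$, giving
\[
\langle h, p\rangle_{L^2(D)} = \int_D (y_D - y)\, z\,\D x = -\langle y-y_D, z\rangle_{L^2(D)}.
\]
Plugging this into the derivative produces $D_u J(u,\omega)[h] = \langle \lambda u - p,\, h\rangle_{L^2(D)}$ for every $h\in L^2(D)$, and the Riesz representation in $L^2(D)$ identifies $\nabla_u J(u,\omega) = \lambda u - p(\cdot,\omega)$.

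The main obstacle is really just bookkeeping: keeping sign conventions straight in the adjoint equation, and making sure the test functions $v = z$ and $v = p$ are admissible (both lie in $H_0^1(D)$, so they are). Because the calculation is carried out $\omega$-by-$\omega$, no measurability or integrability subtleties enter at this stage; those would only be needed later, to check that $\nabla_u J(u,\cdot)$ delivers a valid stochastic gradient in the sense of Assumption~\ref{assumption:gradient}.
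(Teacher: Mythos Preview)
Your argument is correct, but it differs from the paper's in a meaningful way. You exploit directly that the control-to-state map $T_\omega$ is bounded linear (Lemma~\ref{lemma:poissonrandom}), so $J(\cdot,\omega)$ is an explicit quadratic and the Fr\'echet derivative drops out of a two-term Taylor expansion; the adjoint then appears only as the device that converts $\langle y-y_D, T_\omega h\rangle$ into $\langle -p, h\rangle$ via the self-adjointness of the bilinear form. The paper instead uses the \emph{averaged adjoint} approach of \cite{Sturm2015}: it introduces a parametrized Lagrangian $L_\omega(t,y,p)$, defines $y^t$ solving the perturbed state equation and $p^t$ solving the averaged adjoint equation \eqref{eq:averagedadjoint}, proves $p^t\to p^0$ in $H_0^1(D)$ (Lemma~\ref{lem:strongconvergenceAvgAdjoint}), and then reads off the directional derivative from the identity $L_\omega(t,y^t,p^t)=L_\omega(t,y^0,p^t)$. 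The paper's route deliberately avoids invoking differentiability (or even linearity) of $u\mapsto y$, which makes it transferable to nonlinear state equations; your route is shorter and more transparent precisely because the problem at hand is linear-quadratic. Both yield the same gradient formula and the same adjoint PDE, and neither needs any probabilistic machinery since everything is pointwise in $\omega$.
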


Algorithm~\ref{alg:PSGD_Hilbert} applied to \eqref{eq:stationaryheatsourceproblemrandom} is therefore:
\begin{algorithm}[H]
 \caption{PSG for Random Stationary Heat Problem}
\label{alg:PSGD_StationaryHeat}
 \begin{algorithmic}[0]
\State \textbf{Initialization:} $u_1 \in L^2(D)$.
\For{$n= 1,2,\dots$}
\State Generate random $a(\cdot, \omega_n)$, independent from previous observations, and $\tau_n >0$
\State $y_n \gets$ solution to \eqref{eq:Poissonrandom} with $a(\cdot,\omega) = a(\cdot,\omega_n)$
\State $p_n \gets$ solution to \eqref{eq:Poissonrandom-adjoint} with $y=y_n$ and $a(\cdot,\omega) = a(\cdot,\omega_n)$
\State $G(u_n,\omega_n) := \lambda u_n - p_n$
\State $u_{n+1} := \pi_{\U}(u_n - \tau_n G(u_n,\omega_n))$
\EndFor
 \end{algorithmic}
\end{algorithm}

To prove convergence of Algorithm~\ref{alg:PSGD_StationaryHeat}, we need the following result.
\begin{lemma}\label{lemma:poissonrandom-adjoint}
Let $y(\cdot,\omega), y_D \in L^2(D)$. Then there exists a unique $p(\cdot, \omega) \in H_0^1(D)$ that satisfies \eqref{eq:Poissonrandom-adjoint}.
Moreover, there exists a constant $C_2>0$ such that for almost every $\omega \in \Omega$
\begin{equation} \label{eq:Aprioribounds_Poisson_stochastic_adjoint}
 \lVert p(\cdot, \omega)\rVert_{L^2(D)} \leq C_2 \lVert y_D - y(\cdot,\omega) \rVert_{L^2(D)}. \\
\end{equation}
\end{lemma}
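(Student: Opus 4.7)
The plan is to observe that the adjoint equation \eqref{eq:Poissonrandom-adjoint} has exactly the same structure as the state equation \eqref{eq:Poissonrandom}, so that Lemma~\ref{lemma:poissonrandom} can be applied essentially verbatim. By the symmetry of the bilinear form, equation \eqref{eq:Poissonrandom-adjoint} may be rewritten as
\[
\int_D a(x,\omega)\, \nabla p(x,\omega) \cdot \nabla v(x)\, \D x = \int_D (y_D(x)- y(x,\omega))\, v(x)\, \D x \quad \forall v \in H_0^1(D),
\]
which is precisely the weak form of a Poisson-type problem with random coefficient $a(\cdot,\omega)$ and right-hand side $y_D - y(\cdot,\omega)$. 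Since $y_D$ and $y(\cdot,\omega)$ both lie in $L^2(D)$, so does the source $y_D - y(\cdot,\omega)$.

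I would then apply Lemma~\ref{lemma:poissonrandom} directly to this source. Existence and uniqueness of $p(\cdot,\omega) \in H_0^1(D)$ follows immediately, as does the a priori estimate
\[
\lVert p(\cdot,\omega) \rVert_{L^2(D)} \leq C_1\, \lVert y_D - y(\cdot,\omega) \rVert_{L^2(D)},
\]
where $C_1$ is the constant appearing in \eqref{eq:Aprioribounds_Poisson_stochastic}. Setting $C_2 := C_1$ yields \eqref{eq:Aprioribounds_Poisson_stochastic_adjoint}. For completeness, the constant arises by testing \eqref{eq:Poissonrandom-adjoint} with $v = p(\cdot,\omega)$, using coercivity of the bilinear form (via $a(x,\omega) \geq a_{\min} > 0$ uniformly in $x$ and $\omega$), Cauchy--Schwarz on the right-hand side, and the Poincar\'e inequality to pass from the $H_0^1$-seminorm back to an $L^2$-bound; the resulting $C_2$ depends only on $a_{\min}$ and the Poincar\'e constant of $D$, and in particular is independent of $\omega$.

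There is no essential obstacle: the proof is a direct reduction to the already-established Lemma~\ref{lemma:poissonrandom}. The only point worth flagging is to record that the constant $C_2$ is deterministic (uniform in $\omega$), which is what will be needed later to guarantee that $p(\cdot,\omega)$ is sufficiently integrable in $\omega$ for the stochastic gradient $G(u_n,\omega_n) = \lambda u_n - p_n$ in Algorithm~\ref{alg:PSGD_StationaryHeat} to satisfy the quadratic growth condition of Assumption~\ref{assumption:gradient}(iii).
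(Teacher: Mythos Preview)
Your proposal is correct and mirrors the paper's own proof: both reduce the adjoint equation to the same Lax--Milgram setting as Lemma~\ref{lemma:poissonrandom} and obtain the a priori bound by testing with $v=p(\cdot,\omega)$, coercivity, Cauchy--Schwarz, and Poincar\'e, yielding the identical constant $C_2 = C_p^2/a_{\min} = C_1$.
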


\begin{thm}
Suppose that $a(\cdot, \cdot)$ satisfies \eqref{eq:boundsona}. If step sizes are chosen satisfying \eqref{eq:RobbinsMonroStepSize}, then for \eqref{eq:stationaryheatsourceproblemrandom}, the sequence $\{ u_n\}$ generated by Algorithm~\ref{alg:PSGD_StationaryHeat}
\begin{enumerate}
 \item converges strongly a.s.~to the unique optimum $\bar{u}$, if $\lambda \neq 0$.
 \item converges weakly a.s.~to a point in the set $S=\{u \in C: j(u)\leq j(w) \, \, \forall w \in C\}$ if $\lambda = 0$. 
\end{enumerate}
\end{thm}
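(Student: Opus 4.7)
The plan is to apply Theorem~\ref{lemma:convergenceofPSG} and Corollary~\ref{lemma:convergence:stronglyconvexcase} by verifying that Assumption~\ref{assumption:j} and Assumption~\ref{assumption:gradient} hold for problem~\eqref{eq:stationaryheatsourceproblemrandom} with the stochastic gradient from Algorithm~\ref{alg:PSGD_StationaryHeat}. Existence of a minimizer (needed for both conclusions) is provided by Lemma~\ref{lemma:existenceuniquenessoptimalcontrol-stochastic}, and strong convexity (needed for part 1) is immediate when $\lambda > 0$ since $u \mapsto (\lambda/2)\|u\|_{L^2(D)}^2$ is $\lambda$-strongly convex and the $\tfrac{1}{2}\|T_\omega u - y_D\|_{L^2(D)}^2$ term is convex.

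To verify Assumption~\ref{assumption:j}, I would note that by the linearity of $T_\omega$ (Lemma~\ref{lemma:existenceuniquenessoptimalcontrol-stochastic}), the functional $u \mapsto J(u,\omega) = \tfrac{1}{2}\|T_\omega u - y_D\|_{L^2(D)}^2 + \tfrac{\lambda}{2}\|u\|_{L^2(D)}^2$ is convex in $u$ for almost every $\omega$, and it is Fr\'echet differentiable on all of $L^2(D)$ by Proposition~\ref{prop:gradient}; $L^2$-differentiability then follows from the uniform bounds \eqref{eq:Aprioribounds_Poisson_stochastic} and \eqref{eq:Aprioribounds_Poisson_stochastic_adjoint} together with \eqref{eq:boundsona}.

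For Assumption~\ref{assumption:gradient}, the key observation is that the exact gradient $\nabla_u J(u_n,\omega_n) = \lambda u_n - p_n$ is used, so $G(u_n,\xi_n) = \nabla j(u_n) + w_n$ with $r_n \equiv 0$; this makes conditions (i) and (ii) trivial (with $K_n = 0$) when the filtration is taken to be the natural one generated by $\omega_1,\dots,\omega_{n-1}$. For the growth condition (iii), I would chain the a priori estimates: Lemma~\ref{lemma:poissonrandom-adjoint} gives $\|p(\cdot,\omega)\|_{L^2(D)} \le C_2(\|y_D\|_{L^2(D)} + \|y(\cdot,\omega)\|_{L^2(D)})$, and Lemma~\ref{lemma:poissonrandom} gives $\|y(\cdot,\omega)\|_{L^2(D)} \le C_1 \|u\|_{L^2(D)}$, so
\begin{equation*}
\|G(u,\omega)\|_{L^2(D)} \le (\lambda + C_1 C_2)\|u\|_{L^2(D)} + C_2 \|y_D\|_{L^2(D)}.
\end{equation*}
Squaring and using $(a+b)^2 \le 2a^2 + 2b^2$ (then taking expectation) yields the required bound $\E[\|G(u,\xi)\|^2] \le M_1 + M_2 \|u\|^2$ for constants $M_1, M_2$ independent of $\omega$, valid on all of $L^2(D)$ and in particular on $\mathcal{U}_{ad}$.

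With all hypotheses verified, part~2 follows directly from the third conclusion of Theorem~\ref{lemma:convergenceofPSG}, while part~1 follows from Corollary~\ref{lemma:convergence:stronglyconvexcase} since $\lambda > 0$ makes $j$ strongly convex. I anticipate no serious obstacles: all the heavy lifting has already been packaged into the abstract convergence theorem and the PDE a priori estimates; the only mild subtlety is checking that the bounds of Lemmas~\ref{lemma:poissonrandom} and~\ref{lemma:poissonrandom-adjoint} hold uniformly in $\omega$, which is guaranteed by the uniform ellipticity assumption~\eqref{eq:boundsona}.
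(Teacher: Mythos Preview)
Your proposal is correct and follows essentially the same route as the paper: verify Assumptions~\ref{assumption:j} and~\ref{assumption:gradient} via the a~priori estimates of Lemmas~\ref{lemma:poissonrandom} and~\ref{lemma:poissonrandom-adjoint}, note that $r_n\equiv 0$, and then invoke Theorem~\ref{lemma:convergenceofPSG} and Corollary~\ref{lemma:convergence:stronglyconvexcase}. The only step you leave implicit but the paper spells out is the justification that $\E[\nabla_u J(u,\xi)]=\nabla j(u)$ (needed to conclude $r_n=0$); the paper obtains this by observing that the pointwise bound on $\lVert G(u,\omega)\rVert$ is uniform in $\omega$ on the bounded set $\mathcal U_{ad}$ and then applying dominated convergence.
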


\begin{proof}
We will verify the requirements of Lemma~\ref{lemma:convergenceofPSG} with $G(u,\omega) = \nabla_u J(u,\omega)$. In the proof for Lemma~\ref{lemma:existenceuniquenessoptimalcontrol-stochastic}, we already showed that $u \mapsto J(u,\xi(\omega))$ is convex; $L^2$-Fr\'echet differentiability is clear, therefore Assumption~\ref{assumption:j} is clearly satisfied. With the bounds \eqref{eq:Aprioribounds_Poisson_stochastic} and \eqref{eq:Aprioribounds_Poisson_stochastic_adjoint}, 
\begin{equation}\label{eq:boundsforgradient-apriori}
\begin{aligned}
 \lVert G(u,\omega) \rVert_{L^2(D)} & \leq \lambda \lVert  u  \rVert_{L^2(D)} + \lVert p(\cdot,\omega) \rVert_{L^2(D)}\\
 & \leq \lambda \lVert  u  \rVert_{L^2(D)} + C_2 ( \lVert y_D \rVert_{L^2(D)} + \lVert y(\cdot,\omega) \rVert_{L^2(D)})\\
 & \leq \lambda  \lVert  u  \rVert_{L^2(D)} + C_2 ( \lVert y_D \rVert_{L^2(D)} + C_1\lVert u \rVert_{L^2(D)}).
\end{aligned}
\end{equation}
Thus there exist constants $M_1$, $M_2$ such that $\lVert G(u,\omega) \rVert_{L^2(D)} \leq M_1 + M_2 \lVert u \rVert_{L^2(D)}^2$.
Since $\U$ is bounded, $\lVert G(u,\omega)\rVert_{L^2(D)}$ is even dominated by a deterministic constant $M$ with probability one. By Lebesgue's dominated convergence theorem,
$$\nabla j(u) = \frac{\D}{\D u} \E [J(u,\xi)] = \E \left[\nabla_u J(u,\xi) \right] = \E[G(u,\xi)].$$
In particular, $G(u,\omega) = \nabla j(u) + w$ for a random variable satisfying $\E[w] = 0.$ There is no bias term if the stochastic gradient is chosen such that $G(u,\omega) = \nabla_u J(u, \omega).$ Therefore all conditions of Assumption~\ref{assumption:gradient} are satisfied.

The set of solutions $S$ is nonempty by Lemma~\ref{lemma:existenceuniquenessoptimalcontrol-stochastic}. Hence we can conclude the following.

 \begin{enumerate}
 \item If $\lambda \neq 0$, $j$ is strongly convex and therefore by Lemma~\ref{lemma:convergence:stronglyconvexcase} $\{u_n\}$ strongly converges a.s.~to a unique minimum $\bar{u}$.
 \item If $\lambda = 0$, $j$ is convex and therefore by Lemma~\ref{lemma:convergenceofPSG}, $\{ u_n\}$ weakly converges a.s.~to a point in the solution set $S$.
 \end{enumerate}
\end{proof}

\section{Numerical Experiments}\label{sec:Numerics}
To demonstrate Algorithm~\ref{alg:PSGD_StationaryHeat}, let the domain be given by $D=[0,1]\times[0,1]$ and $\U = \{ u \in L^2(D) | -1 \leq u(x) \leq 1  \quad \forall x \in D\}.$ In this case, the projection $\pi_{\U}$ can be computed pointwise using the formula $\pi_{\U}(u) = \min \{1, \{\max\{-1, u \} \}.$ For the sake of illustration, assume that the material parameter satisfies $a(x,\omega) = a(\omega) \in \R$ for all $x\in D$.

For simulations, a finite element uniform triangulation of piecewise linear elements and with $3990$ nodes ($h_{\min} \approx 0.013)$ was used for $D$.
Simulations were run on FEniCS \cite{Alnes2015} on a laptop with Intel Core i7 Processor (8 x 2.6 GHz) with 16 GB RAM.

\paragraph{Strongly convex case}
An example was constructed for $\lambda > 0$ where the optimum of \eqref{eq:stationaryheatsourceproblemrandom} is known in the deterministic case.  We choose $\bar{p}(x) = -\sin(2 \pi x_1) \sin(2 \pi x_2),$ which in particular is equal to zero on the boundary of $D$. An optimum $\bar{u}$ must satisfy $\langle \lambda \bar{u} - \bar{p}, w - \bar{u} \rangle_{L^2(D)} \geq 0$ for all $w \in \U.$ Thus $\bar{u} = \pi_{\U}(\tfrac{1}{\lambda} \bar{p}).$ We have $\bar{y} = -\tfrac{1}{\bar{a} 8 \pi^2 \lambda}\sin(2\pi x_1)\sin(2 \pi x_2),$ which satisfies the strong form of \eqref{eq:Poissonrandom}. Finally, we have
$y_D(x) = -\left( \bar{a}8 \pi^2 + \frac{1}{\bar{a}8 \pi^2 \lambda} \right)\sin(2 \pi x_1) \sin(2 \pi x_2),$ which satisfies the strong form of \eqref{eq:Poissonrandom-adjoint}.

For the experiments, we chose $\bar{a}=2$ and $\lambda=2$, resulting in the target temperature $y_D(x)=-\left( 16 \pi^2 + \frac{1}{32 \pi^2} \right)\sin(2 \pi x_1) \sin(2 \pi x_2)$. Values for $a(\omega)$ are chosen randomly from a truncated normal distribution defined on the interval $[0.5,3.5]$ with mean $2$ and standard deviation $\sigma = 0.25;$ these were chosen to satisfy the bounds \eqref{eq:boundsona}. We use the step size rule \eqref{eq:stepsizerule-stronglyconvex} with $\theta = \tfrac{1}{3}$, where it is noted that an optimal bound for the strong convexity parameter $\mu$ is equal to $\lambda.$

Results of the simulation are in Figure~\ref{fig:objfuncstationaryheat}. The function $u_{N}$ as expected approximates the form of the deterministic optimum $\bar{u}= -\frac{1}{2}\sin(2 \pi x_1) \sin(2 \pi x_2)$. To investigate convergence behavior, the reference solution $\tilde{u}$ is obtained by running the algorithm for $N=10,000$ steps on a finer mesh ($15,681$ nodes, $h_{\min} \approx 6.6 \cdot 10^{-3}$). To compute objective function values, we use $\hat{j}(u_n) = \frac{1}{2} \lVert \hat{y}_n - y_D \rVert_{L^2(D)}^2 + \frac{\lambda}{2} \lVert u_n \rVert_{L^2(D)}$ as an estimate of the objective function in problem \eqref{eq:stationaryheatsourceproblemrandom}. Note that $\hat{y}_n$ corresponds to a single random solution of the problem \eqref{eq:Poissonrandom} with $u=u_n$ and $a(\omega) = a(\omega_n)$. The error of objective function values $\hat{j}(u_n) - \hat{j}(\tilde{u})$ as a function of iteration number is plotted on a log/log scale to demonstrate convergence behavior of $\mathcal{O}(n^{-1.00})$, which is consistent with the expected error from \eqref{eq:objectiveerrorbounds}. The error of iterates $\lVert u_n - \tilde{u} \rVert_{L^2(D)}$ is similarly plotted to display convergence of the form $\mathcal{O}(n^{-0.78})$, which is better than the expected convergence from \eqref{eq:iterationerrorbounds}.

\begin{figure}
\centering
 \begin{subfigure}{.45\textwidth}
   \includegraphics[width=\linewidth]{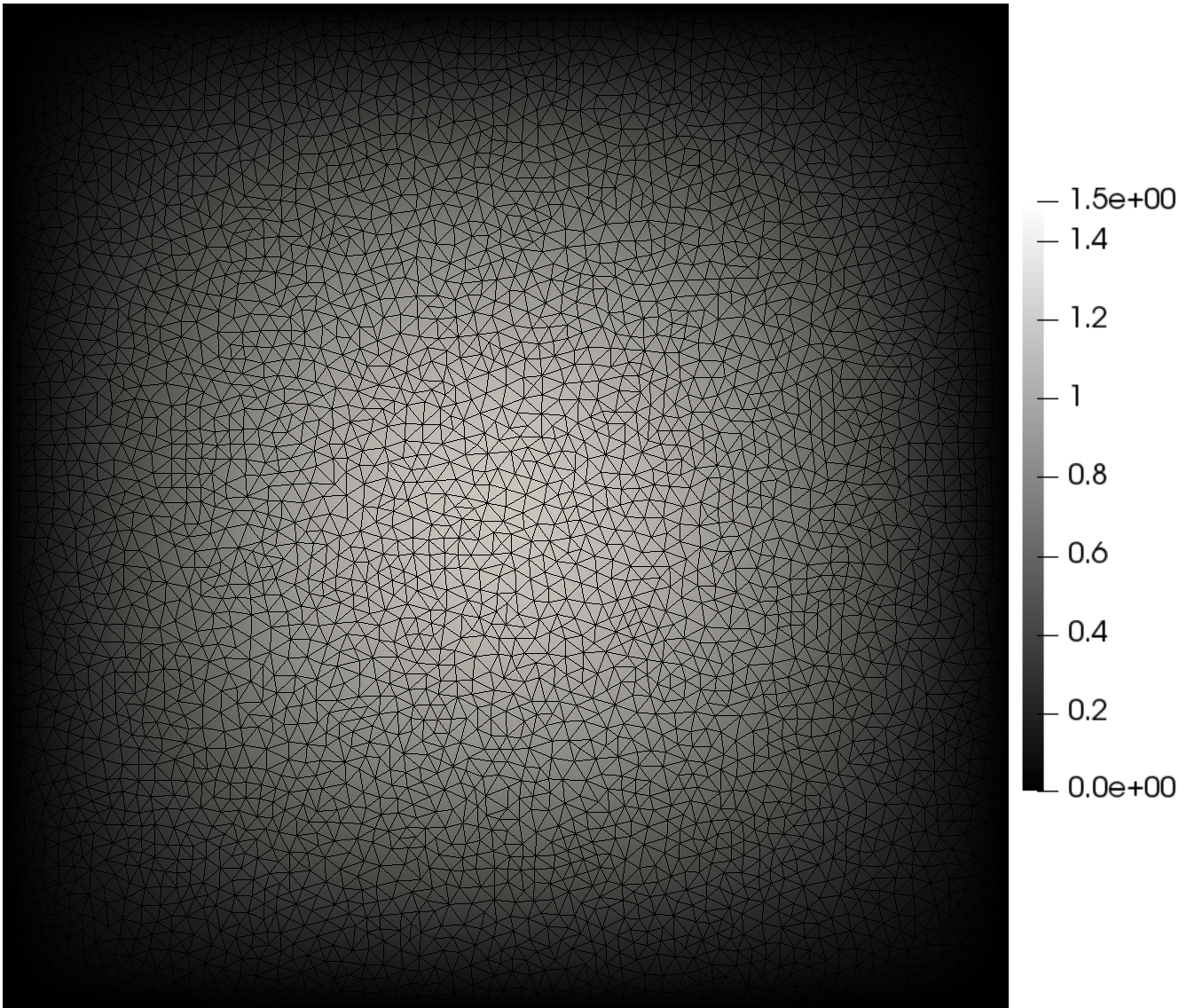}
  \caption{$u_{1}=\frac{3}{2}\sin(\pi x) \sin(\pi y)$}
 \end{subfigure}
   \begin{subfigure}{.45\textwidth}
   \includegraphics[width=\linewidth]{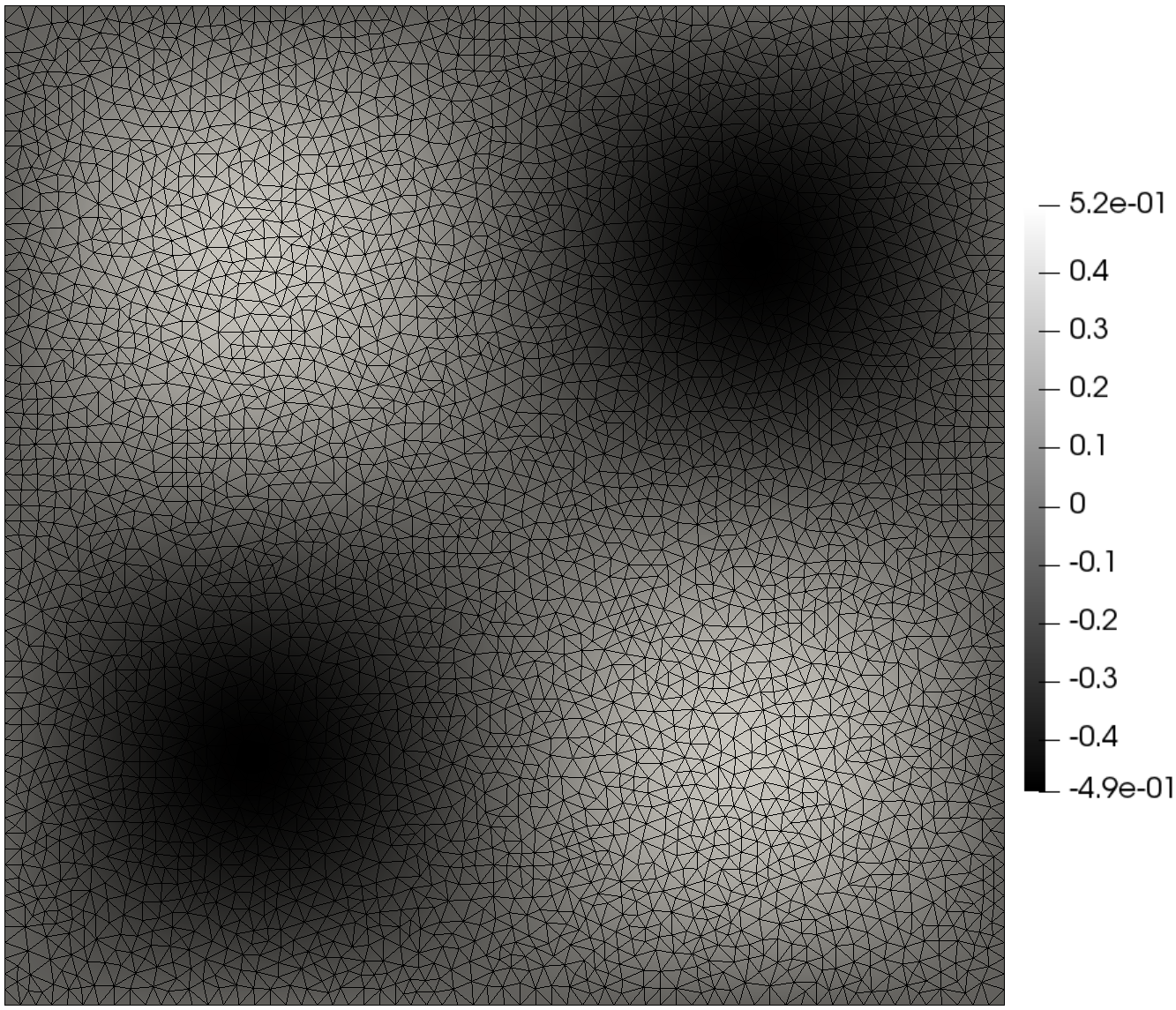}
  \caption{$u_{N}$}
 \end{subfigure}

 \begin{subfigure}{.45\textwidth}
   \includegraphics[width=\linewidth]{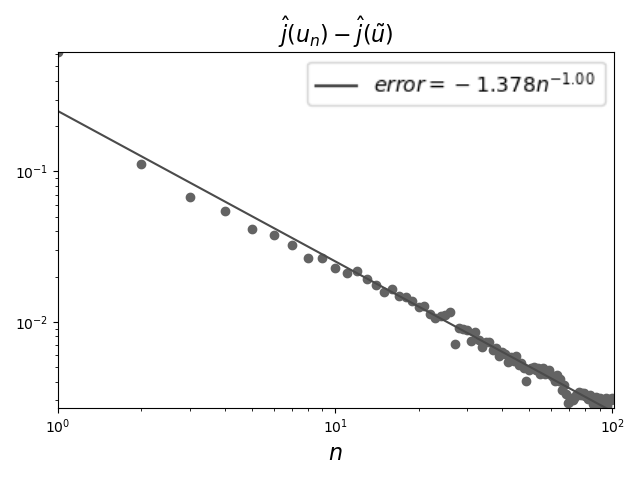}
  \caption{Errors in the objective function value}
 \end{subfigure}
   \begin{subfigure}{.45\textwidth}
   \includegraphics[width=\linewidth]{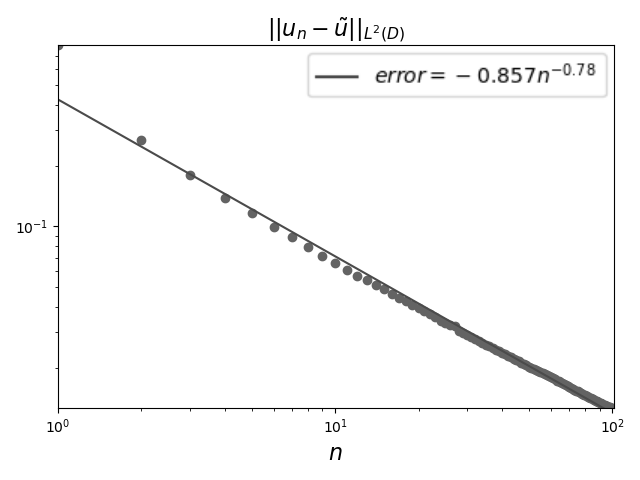}
  \caption{Errors in the control}
 \end{subfigure}
   \caption{Strongly convex experiment with $N=100$ iterations.}
\label{fig:objfuncstationaryheat}
 \end{figure}

\paragraph{General convex case}
For the general convex case, we set $\lambda=0$ and the simulate the following modified problem. Note that the introduction of $e_D$ is needed only to analytically generate a deterministic solution to the problem; clearly, the objective function is convex.
\begin{align*}
 \min_{u \in \U} \Big\lbrace \E[J(u,\omega)] &= \E\left[\frac{1}{2}\lVert y-y_D \rVert_{L^2(D)}^2 \right] \Big\rbrace\\
 \text{s.t.} \quad -\nabla \cdot (a(\omega) \nabla y(x,\omega)) &= u(x)+e_D(x), \quad (x,\omega) \in D \times \Omega\\
 y(x,\omega) &=0,\phantom{(x)+e_D(x),}  \quad(x,\omega) \in \partial D \times \Omega.
\end{align*}
An example was constructed where the deterministic optimum is known as in \cite{Troeltzsch2009}. We choose $p(x)=\frac{1}{8\pi^2}\sin(2\pi x_1) \sin(2\pi x_2)$ and $\bar{y}(x)=\sin(\pi x_1) \sin(\pi x_2)$, and note that the deterministic optimum is the bang bang solution $\bar{u}(x) = \text{sign}(p(x)).$ For the experiments, we choose $y_D(x) = \sin(\pi x_1) \sin(\pi x_2)+2\sin(2 \pi x_1) \sin(2\pi x_2)$ and $e_D(x) = 4 \pi^2 \sin(\pi x_1) \sin(\pi x_2) - \text{sign}( \sin(2\pi x_1) \sin(2\pi x_2))$. We use the same distribution for $a(\omega)$ as in the strongly convex case. We employ averaging of the iterates as in \eqref{eq:averagediterates} with $i=1$, i.e.\,$\tilde{u}_1^N = \sum_{k=1}^N \gamma_k u_k$ and $\gamma_k = \tau_k/(\sum_{l=1}^N \tau_l)$. The robust step size rule \eqref{eq:stepsizerule-convex} with $\theta = 500$ is used, which was obtained after tuning. Note that $D_S = 1$ if $u_1(x,y)=0$ (in the center of the admissible set). From \eqref{eq:boundsforgradient-apriori}, noting that the right hand side of the PDE constraint is $u+e_D$ instead of $u$, we have  $\lVert G(u,\omega) \rVert_{L^2(D)} \leq C_2(\lVert y_D \rVert_{L^2(D)} + C_1(\lVert u \rVert_{L^2(D)} +\lVert e_D\rVert_{L^2(D)})) \leq 3.9.$  For the bound, we used that $C_1 = C_2 = C_p^2/a_{\min}$ by the proofs for Lemma~\ref{lemma:poissonrandom} and Lemma~\ref{lemma:poissonrandom-adjoint} with $a_{\min}=0.5$ and the Poincar\'e constant $C_p$, which can be bounded by $\text{diam}(D)/\pi = \sqrt{2}/\pi$ \cite{Payne1960}. Additionally, note that $\lVert y_D\rVert_{L^2(D)}^2=\frac{5}{4}$, $\lVert e_D\rVert_{L^2(D)}^2=(1+4\pi^4)$ and $\lVert u \rVert_{L^2(D)} \leq 1$ for all $u \in \U$.


Results for the first 100 iterations of a single trajectory are displayed in Figure~\ref{fig:objfuncstationaryheat-convex}. Convergence is observed as before, taking the same finer mesh and number of iterations for the reference solution. To approximate objective function values, we use $m=1000$ newly generated samples for each iteration to approximate $j(\tilde{u}_1^n) \approx \hat{j}(\tilde{u}_1^n) = \frac{1}{2m} \sum_{k=1}^m \lVert \hat{y}_{n,k} - y_D \rVert_{L^2(D)}^2 + \frac{\lambda}{2} \lVert \tilde{u}_1^n \rVert_{L^2(D)}$, where $\hat{y}_{k,n} = T_{\omega_{n,k}}(\tu_1^n)$, $\omega_{n,k}$ is the $k^\text{th}$ sample at iteration $n$, and $T_{\omega_{n,k}}(\tu_1^n)$ is the solution of the random PDE with $u = \tu_1^n$ and $a(\omega) = a(\omega_{n,k})$. Note that these additional samples are only used for the generation of a convergence plot; the iterates $u_n$ are still computed using a single realization $\omega_n$.  The error in the objective function value $\hat{j}(\tilde{u}_1^n) - \hat{j}(\tilde{u})$ is $\mathcal{O}(n^{-0.55}),$ which is consistent with the expected theoretical behavior \eqref{eq:efficiency-convexcase}.  The error of iterates for this experiment also displays convergence (for which we do not have a theoretical bound). We note that repeated experiments show that the method with averaging produces less smooth convergence behavior in the objective function. Part of this is related to the fact that step sizes are chosen to be larger; additionally, the regularization constant $\lambda$ also contributes to more smooth convergence behavior in the strongly convex case.  

\begin{figure}
\centering
 \begin{subfigure}{.45\textwidth}
   \includegraphics[width=\linewidth]{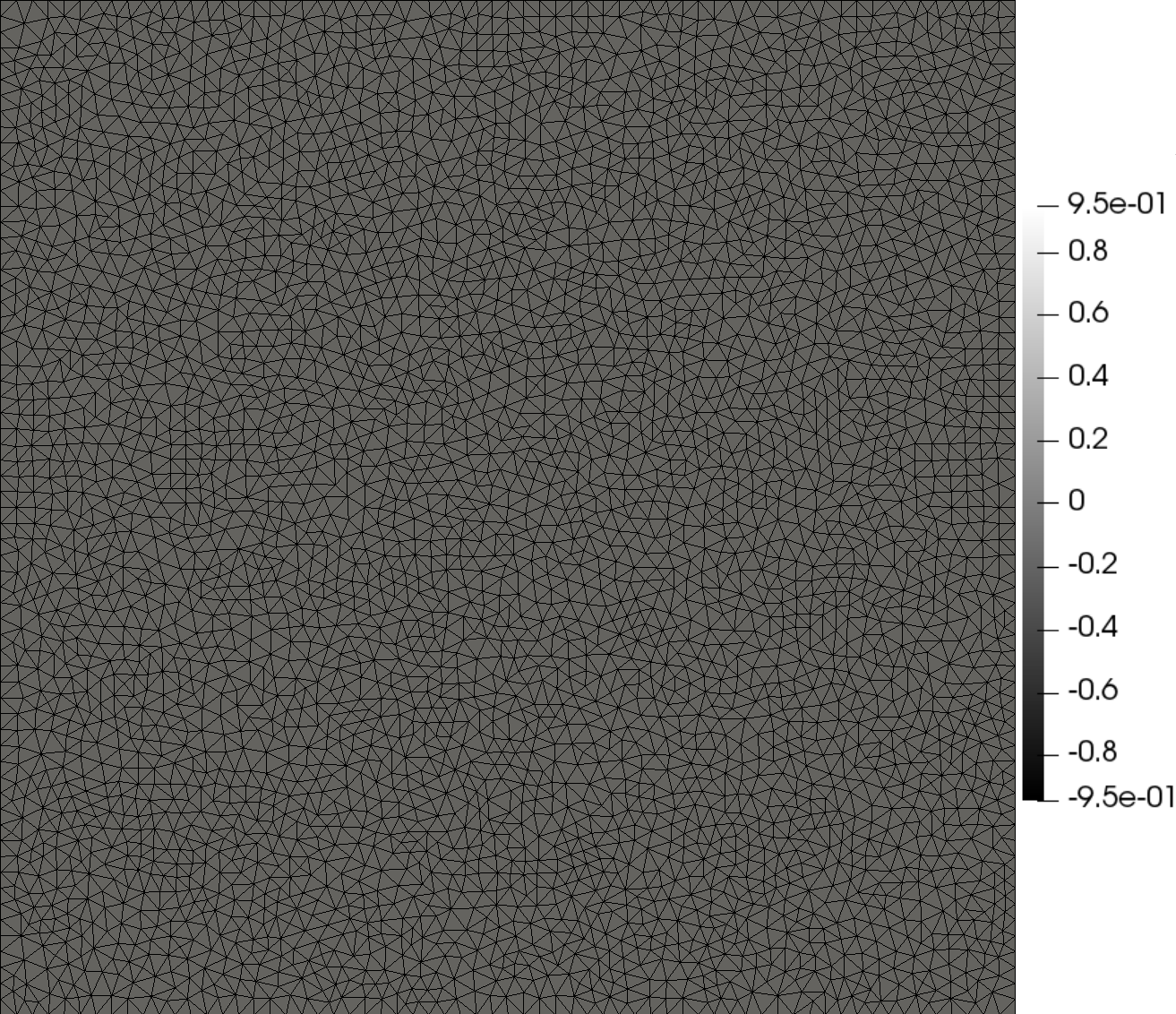}
  \caption{$u_{1}= 0.0$}
 \end{subfigure}
   \begin{subfigure}{.45\textwidth}
   \includegraphics[width=\linewidth]{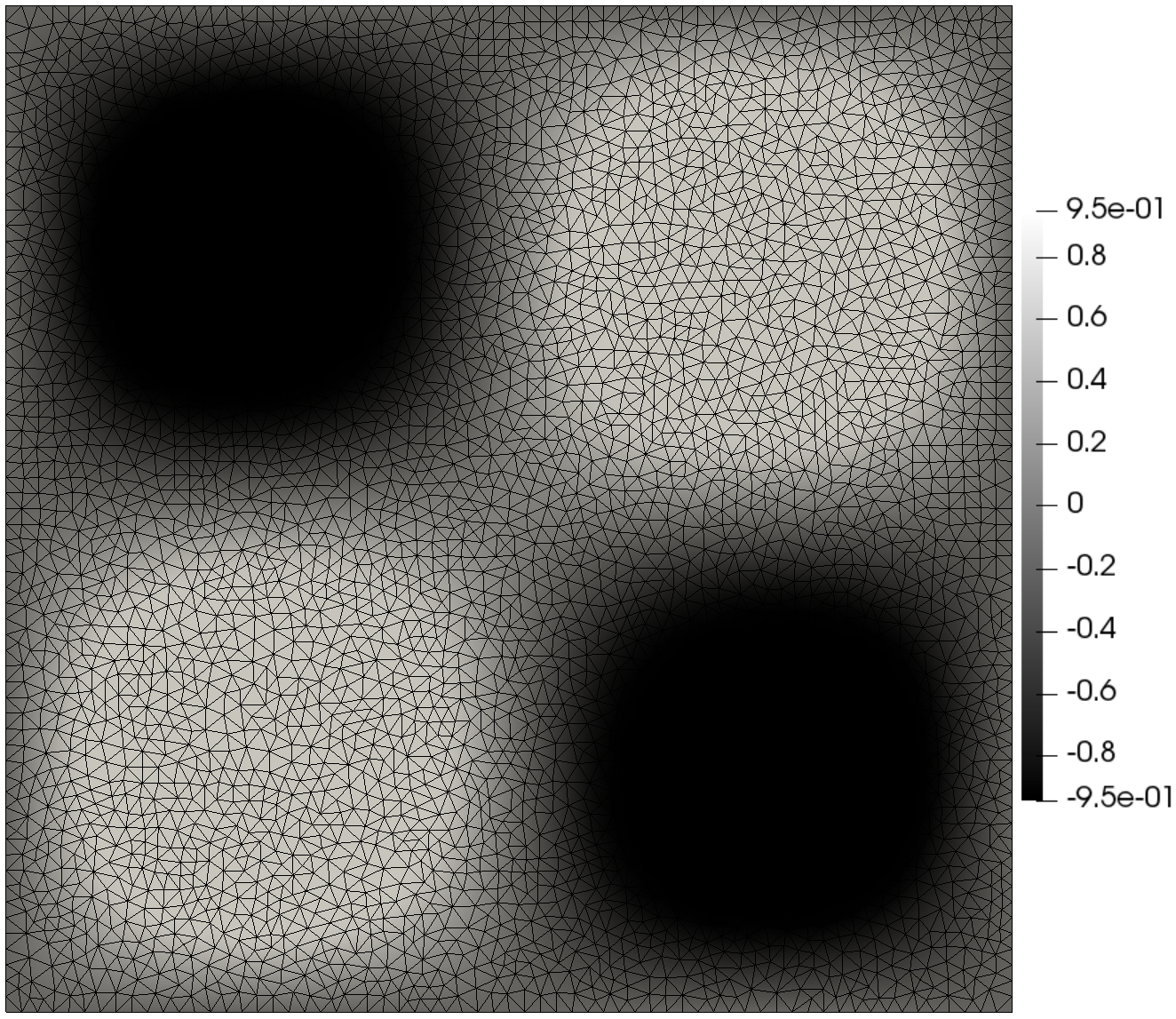}
  \caption{$u_{N}$}
 \end{subfigure}

 \begin{subfigure}{.45\textwidth}
   \includegraphics[width=\linewidth]{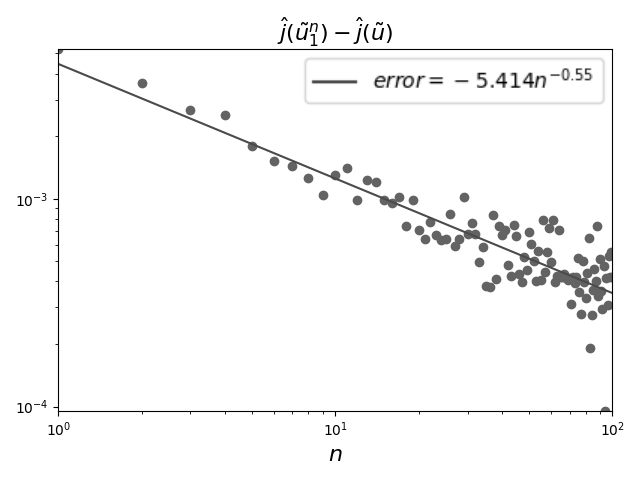}
  \caption{Errors in objective function value}
 \end{subfigure}
   \begin{subfigure}{.45\textwidth}
   \includegraphics[width=\linewidth]{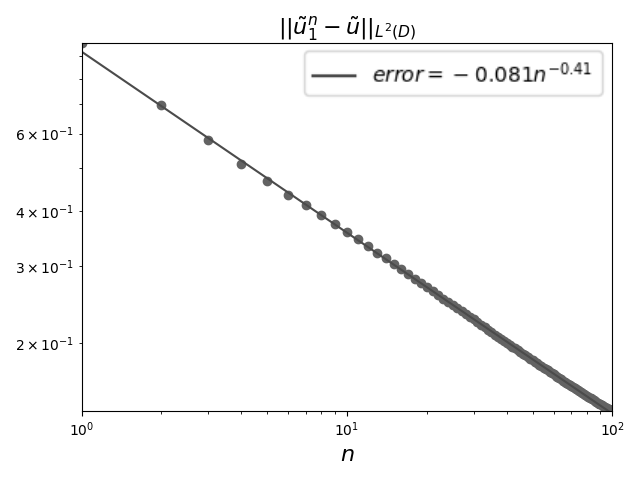}
  \caption{Errors in averaged control}
 \end{subfigure}
   \caption{Convex experiment with $N=100$ iterations.}
\label{fig:objfuncstationaryheat-convex}
 \end{figure}

\section{Conclusion}\label{sec:Conclusion}
In this paper, we presented convergence results for the projected stochastic gradient algorithm for convex problems in Hilbert spaces. The work was motivated by applications in PDE constrained optimal control problems, where a model may contain uncertain parameters. Gradient-based methods are standard tools in deterministic PDE constrained optimization, but the stochastic gradient-based methods from stochastic approximation had been, to the authors' knowledge, up until now undeveloped for problems involving uncertainty. Efficiency estimates and step size rules were derived, which have implications for practical application. Finally, the algorithm was demonstrated on a model problem with random elliptic PDE constraints. Convergence behavior was compared for the strongly convex case and the convex case. In future work, we will investigate the role of the numerical error made by discretization.

\section*{Acknowledgments}
The authors would like to thank the anonymous reviewers for their careful reading of this manuscript.

\appendix \section{Additional Proofs}
\begin{lemma}\label{lemma:recursion-statement}
For a recursion of the form
\begin{equation}
\label{eq:recursion-relation}
 e_{n+1} \leq e_n \left( 1 - \frac{c_1}{n+\nu} +\frac{c_2}{(n+\nu)^2} \right)+\frac{c_3}{(n+\nu)^2},
 \end{equation}
where $e_1, c_2, c_3 \geq 0$ and $c_1 > 1$, it holds that 
\begin{equation}\label{eq:recursion-formula}
e_n \leq \frac{K}{n+\nu},
\end{equation}
with $$K:=\frac{c_3 + e_1 c_2}{c_1-1}, \quad \nu:= \frac{c_3+e_1c_2}{e_1(c_1 -1)}-1.$$
\end{lemma}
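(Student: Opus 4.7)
The plan is to prove the bound $e_n \leq K/(n+\nu)$ by induction on $n$, relying on the carefully chosen values of $K$ and $\nu$ to make the base case and inductive step close.

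For the base case $n=1$, the definition of $\nu$ gives $1+\nu = (c_3+e_1 c_2)/(e_1(c_1-1)) = K/e_1$, hence $K/(1+\nu) = e_1$, and the claim holds with equality.

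For the inductive step, assume $e_n \leq K/(n+\nu)$ and abbreviate $m := n+\nu$. Substituting into \eqref{eq:recursion-relation} yields
\begin{equation*}
e_{n+1} \leq \frac{K}{m}\left(1 - \frac{c_1}{m} + \frac{c_2}{m^2}\right) + \frac{c_3}{m^2},
\end{equation*}
and it suffices to show that the right-hand side is bounded by $K/(m+1)$. Subtracting, clearing denominators by $m^3(m+1)$, and invoking the identity $K(c_1-1) = c_3 + e_1 c_2$ (which follows from the definition of $K$) to eliminate $c_1$, the target inequality reduces to the polynomial bound
\begin{equation*}
f(m) := e_1 c_2 \, m^2 + [K(1-c_2) + e_1 c_2]\, m - K c_2 \geq 0 \qquad \text{for all } m \geq 1+\nu.
\end{equation*}
A direct substitution gives $f(1+\nu) = K^2/e_1 \geq 0$, and differentiation yields $f'(1+\nu) = K(1+c_2) + e_1 c_2 > 0$. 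Since $f$ is convex in $m$ (its leading coefficient $e_1 c_2$ is nonnegative) and its derivative is positive at $m = 1+\nu$, $f'$ remains positive and $f$ is nondecreasing on $[1+\nu,\infty)$; nonnegativity therefore propagates to all relevant values of $m$, closing the induction.

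The main obstacle is the algebraic reduction to the polynomial form of $f$, which involves careful bookkeeping with the identity $K(c_1-1) = c_3 + e_1 c_2$. Once this reduction is performed, the rest is routine verification, and the cleverness of the statement lies in the precise choice of $\nu$ and $K$: they are engineered exactly so that $f(1+\nu)$ equals the minimal nonneg quantity $K^2/e_1$ required for the induction to pass, with the slack growing thereafter.
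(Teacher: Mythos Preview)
Your proof is correct and follows essentially the same approach as the paper: both argue by induction, with the base case $e_1 = K/(1+\nu)$ holding by the definition of $\nu$, and the inductive step reducing to an algebraic inequality in $m = n+\nu$. The only cosmetic difference is that the paper verifies the inductive step by splitting the right-hand side into a term bounded by $K/(m+1)$ via the factorization $m^3 \geq (m-1)m(m+1)$ and a nonpositive remainder $[m(1-c_1)+c_2]K + mc_3 \leq 0$, whereas you clear denominators and analyze the resulting quadratic $f(m)$ directly; these two verifications are algebraically equivalent rearrangements of the same inequality.
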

\begin{proof}
We show \eqref{eq:recursion-formula} by induction. The statement for $n=1$ is clearly satisfied. For $n>1$, we assume that \eqref{eq:recursion-formula} holds and by \eqref{eq:recursion-relation} we get with $\hat{n}:=n+\nu$
\begin{align*}
 e_{n+1} &\leq  \left(1 - \frac{c_1}{\hat{n}} +\frac{c_2}{\hat{n}^2}\right) \frac{K}{\hat{n}}+\frac{c_3}{\hat{n}^2}\\
 & = \left( \frac{\hat{n}^2-\hat{n}}{\hat{n}^3}\right)K+ \left(\frac{\hat{n}(1-c_1)+c_2}{\hat{n}^3} \right)K+\frac{c_3}{\hat{n}^2}\\
 & \leq \frac{K}{\hat{n}+1}.
\end{align*}
In the last inequality, we used the fact that $\hat{n}^3 \geq \hat{n}(\hat{n}-1)(\hat{n}+1)$ and $[(n-\nu)(1-c_1)+c_2]K + (n+\nu)c_3 \leq 0$ for all $\hat{n}.$
\end{proof}

In the following, $C_p$ denotes the Poincar\'e constant for $D$. The Lax-Milgram Lemma and Poincar\'e inequality can be found in \cite{Evans1998}.

\begin{proof}[Proof of Lemma~\ref{lemma:poissonrandom}]
Let $\omega \in \Omega$ be fixed but arbitrary. The bilinear form $b_\omega: H_0^1(D)\times H_0^1(D) \rightarrow \R$ with $b_\omega(y,v) = \int_D a(x,\omega) \nabla y(x) \cdot \nabla v(x) \D x$
is bounded and coercive, since due to Assumption \eqref{eq:boundsona} and the Poincar\'{e} inequality,
\begin{align*}
|b_\omega(y,v)| &\leq a_{\max} \lVert y \rVert_{H_0^1(D)} \lVert v \rVert_{H_0^1(D)},\\
b_\omega(y,y) &\geq a_{\min} |y|_{H_0^1(D)}^2 \geq \frac{a_{\min}}{C_p^2 +1} \lVert y \rVert_{H_0^1(D)}^2.
\end{align*}
The linear form $l:H_0^1(D) \rightarrow \R$ with $l(v) = \int_D u(x) v(x) \D x $
is bounded since $l(v) \leq \lVert u \rVert_{L^2(D)} \lVert v \rVert_{L^2(D)}$, so by the Lax-Milgram Lemma, there exists a unique solution $y=y(\cdot, \omega)$ satisfying \eqref{eq:Poissonrandom}.
Again using the Poincar\'{e} inequality,
\begin{align*}
\lVert y(\cdot, \omega) \rVert_{L^2(D)}^2 \leq C_p^2 |y(\cdot, \omega)|_{H_0^1(D)}^2 &\leq \frac{C_p^2}{a_{\min}} b_\omega(y,y) \leq {\frac{C_p^2}{a_{\min}}} \lVert u \rVert_{L^2(D)} \lVert y(\cdot,\omega) \rVert_{L^2(D)}.
\end{align*}
The constant is given by $C_1:=\frac{C_p^2}{a_{\min}}.$
\end{proof}

\paragraph{Calculation of the Stochastic Gradient}
We will calculate $\nabla_u J(u,\omega)$ for a fixed realization $\omega \in \Omega$ under the assumption that $a(\cdot, \omega)$ satisfies \eqref{eq:boundsona}. We use the averaged adjoint method from \cite{Sturm2015}, Section 3, as opposed to a formal Lagrangian approach as in \cite{Troeltzsch2009}. This method verifies the existence of the adjoint function $p$ without using the differentiability of the control-to-state operator. Define the Lagrangian $L_\omega:[-\tau,\tau] \times H_0^1(D) \times H_0^1(D) \rightarrow \R$ for the perturbed control $u+t\tu$:
\begin{align*}
L_\omega(t,y,p) &:= \frac{1}{2}\int_D  (y(x,\omega)-y_D(x))^2 \D x + \frac{\lambda}{2} \int_D (u(x)+t\tu(x))^2 \D x \\
& \quad + \int_D a(x,\omega) \nabla y(x,\omega) \cdot \nabla p(x) \D x - \int_D (u(x)+t\tu(x)) p(x) \D x.
\end{align*}
A function $y^t=y^t(\cdot, \omega)$ satisfying $d_p L_\omega(t,y,0)[v] = 0$ for all $v \in H_0^1(D)$ must solve the equation
\begin{equation}\label{eq:perturbedstateequation}
\int_D a(x,\omega) \nabla y^t(x,\omega) \cdot \nabla v(x) \D x - \int_D (u(x)+t\tu(x)) v(x) \D x= 0 \quad \forall v \in H_0^1(D).\end{equation}
Using the same arguments as required for Lemma \ref{lemma:poissonrandom}, for each $t \in [-\tau, \tau]$, $y^t$ is unique. 
Thus, for all $t \in [-\tau, \tau]$, the set
$$P(t,y^t,y^0) := \{p \in H_0^1(D) \, \vert \, \int_0^1 d_y L_\omega(t,sy^t +(1-s)y^0,p)[v] \D s = 0 \,\, \forall v \in H_0^1(D) \}$$
is well-defined. A function $p^t = p^t(\cdot, \omega) \in P(t,y^t,y^0)$ must solve the \textit{averaged adjoint equation} for all $v \in H_0^1(D)$ 
\begin{equation}\label{eq:averagedadjoint}
 \int_D a(x,\omega) \nabla v(x) \cdot \nabla p^t(x,\omega) \D x =\int_D \left(y_D(x) - \frac{y^t(x,\omega) + y^0(x,\omega)}{2} \right) v(x) \D x.
 \end{equation}
By the Lax-Milgram Lemma, $p^t$ is unique for each $t\in[-\tau,\tau]$. The following lemma shows $p^t \rightarrow p^0$ in $H_0^1(D).$

\begin{lemma}\label{lem:strongconvergenceAvgAdjoint}
There exist $\alpha>0$, $\beta>0$ such that for all $t\in [-\tau,\tau]$,
\begin{align}
& \lVert p^t(\cdot,\omega) - p^0(\cdot,\omega) \rVert_{H_0^1(D)} \leq \alpha \lVert y^t(\cdot,\omega) - y^0(\cdot,\omega) \rVert_{L^2(D)} \leq \beta |t| \lVert \tu \rVert_{L^2(D)} \label{eq:secondinequalityaveragedadjoint}.
\end{align}
\end{lemma}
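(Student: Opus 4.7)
The plan is to prove the two inequalities separately, in both cases using the standard trick of subtracting the variational equations and testing against a well-chosen function, then invoking the coercivity estimate $a_{\min} |z|_{H^1_0(D)}^2 \le b_\omega(z,z)$ and the Poincaré inequality already used in the proof of Lemma~\ref{lemma:poissonrandom}.

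For the second inequality, I would subtract \eqref{eq:perturbedstateequation} at parameter $t$ from the same equation at parameter $0$ to get
\begin{equation*}
\int_D a(x,\omega)\nabla\bigl(y^t(x,\omega)-y^0(x,\omega)\bigr)\cdot\nabla v(x)\,\D x = \int_D t\,\tu(x)\,v(x)\,\D x\quad\forall v\in H_0^1(D),
\end{equation*}
then test with $v=y^t-y^0$. Coercivity on the left and Cauchy--Schwarz followed by the Poincaré inequality on the right give $a_{\min}|y^t-y^0|_{H^1_0(D)}^2 \le |t|\,C_p\,\|\tu\|_{L^2(D)}\,|y^t-y^0|_{H^1_0(D)}$, hence $\|y^t-y^0\|_{L^2(D)}\le C_p|y^t-y^0|_{H^1_0(D)}\le (C_p^2/a_{\min})|t|\,\|\tu\|_{L^2(D)}$. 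So $\beta=C_1=C_p^2/a_{\min}$ works, which incidentally matches the continuity constant from Lemma~\ref{lemma:poissonrandom}, as expected since $y^t-y^0$ is itself a weak solution with right-hand side $t\tu$.

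For the first inequality, I would subtract the averaged adjoint equation \eqref{eq:averagedadjoint} for $p^t$ from the one for $p^0$ (note the $y_D$ terms cancel), producing
\begin{equation*}
\int_D a(x,\omega)\nabla v(x)\cdot\nabla\bigl(p^t(x,\omega)-p^0(x,\omega)\bigr)\,\D x = -\tfrac{1}{2}\int_D \bigl(y^t(x,\omega)-y^0(x,\omega)\bigr)v(x)\,\D x
\end{equation*}
for all $v\in H_0^1(D)$. Testing with $v=p^t-p^0$ and applying coercivity together with Cauchy--Schwarz and Poincaré yields $a_{\min}|p^t-p^0|_{H^1_0(D)}^2 \le \tfrac{1}{2}\|y^t-y^0\|_{L^2(D)}\,C_p\,|p^t-p^0|_{H^1_0(D)}$, i.e.\ $|p^t-p^0|_{H^1_0(D)}\le (C_p/(2a_{\min}))\|y^t-y^0\|_{L^2(D)}$. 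Passing from the seminorm to the full $H^1_0$-norm by $\|\cdot\|_{H^1_0(D)}^2\le (1+C_p^2)|\cdot|_{H^1_0(D)}^2$ gives an admissible $\alpha=\sqrt{1+C_p^2}\,C_p/(2a_{\min})$.

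There is no real obstacle here; the only thing to be careful about is that the constants $\alpha,\beta$ are deterministic, which follows because the only $\omega$-dependence in the coercivity constant is controlled uniformly by $a_{\min}$ via assumption \eqref{eq:boundsona}. Since both bounds are then independent of $t\in[-\tau,\tau]$ and of $\omega$, chaining the two estimates produces the claimed inequality. As a by-product, letting $t\to 0$ gives $p^t\to p^0$ strongly in $H^1_0(D)$, which is what is needed to identify $p^0$ with the adjoint state appearing in Proposition~\ref{prop:gradient}.
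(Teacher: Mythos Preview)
Your proposal is correct and follows essentially the same route as the paper's own proof: subtract the two variational equations, test with the difference itself, and combine coercivity with Poincar\'e to get the seminorm bound, then pass to the full norm. The only cosmetic differences are that you present the state estimate first and write out explicit constants for $\alpha$ and $\beta$, whereas the paper merely invokes ``equivalence of the $H^1$ norm and seminorm.''
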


\begin{proof}
Since $p^t\in P(t,y^t,y^0)$ and $p^0 \in P(0,y^0,y^0)$ satisfy \eqref{eq:averagedadjoint}, for all $v \in H_0^1(D)$ it holds that
\begin{equation}\label{eq:equationfortesting}
\int_D a(x,\omega) \nabla v(x) \cdot \nabla (p^t(x,\omega) - p^0(x,\omega)) \D x = \frac{1}{2}\int_D (y^0(x,\omega)-y^t(x,\omega))v(x) \D x.
\end{equation}
We get by testing \eqref{eq:equationfortesting} with $v=p^t-p^0$
\begin{align*}
|p^t(\cdot,\omega) -p^0(\cdot,\omega)|_{H_0^1(D)}^2 & \leq \frac{1}{2a_{\min}} \lVert y^t(\cdot,\omega)-y^0(\cdot,\omega) \rVert_{L^2(D)} \lVert p^t(\cdot,\omega)-p^0(\cdot,\omega) \rVert_{L^2(D)}\\
& \leq {\frac{C_p}{2a_{\min}}} \lVert y^t(\cdot,\omega)-y^0(\cdot,\omega) \rVert_{L^2(D)} | p^t(\cdot,\omega)-p^0(\cdot,\omega)|_{H_0^1(D)}.
\end{align*}
The previous expression yields, using the equivalence of the $H^1$ norm and seminorm, the first inequality in \eqref{eq:secondinequalityaveragedadjoint}. For the second inequality, $y^t$ and $y^0$ must satisfy \eqref{eq:perturbedstateequation}, so we have
\begin{equation}\label{eq:equationfortesting-state}
 \int_D a(x,\omega) \nabla (y^t(x,\omega)-y^0(x,\omega)) \cdot \nabla v(x) \D x= \int_D t \tu(x) v(x) \D x \quad \forall v \in H_0^1(D).
\end{equation}
Testing \eqref{eq:equationfortesting-state} with $v=y^t-y^0$ yields the bound
$$\lVert y^t(\cdot,\omega) - y^0(\cdot,\omega) \rVert_{L^2(D)} \leq \frac{C_p^2}{a_{\min}} |t| \lVert \tu \rVert_{L^2(D)}.$$
\end{proof}

\begin{proof}[Proof of Proposition~\ref{prop:gradient}]
By the mean value theorem, the averaged adjoint equation \eqref{eq:averagedadjoint} implies for for $p^t \in P(t,y^t,y^0)$ that
\begin{equation}\label{eq:LagrangianIdentity}
L_\omega(t,y^t, p^t)=L_\omega(t,y^0,p^t),
\end{equation} since $\int_0^1 d_y L_\omega(t,sy_t+(1-s)y_0,p)[y^t-y^0] \D s = 0$ by definition of $P(t, y^t, y^0)$.
Since  $y^0$ satisfies \eqref{eq:perturbedstateequation} for $t=0$, we also have $L_\omega(0,y^0,p^0)=L_\omega(0,y^0,p^t)$. Thus,
\begin{equation}\label{eq:diffquotient}
\begin{aligned}
&J(u+t\tu,\omega)-J(u, \omega) = L_\omega(t,y^t,p^t) - L_\omega(0,y^0,p^0)  = L_\omega(t,y^0,p^t) -L_\omega(0,y^0,p^t) \\
& \quad =  \frac{\lambda}{2} \int_D (u(x)+t \tu(x))^2 \D x - \int_D (u(x)+t\tu(x)) p^t(x,\omega) \D x \\
& \quad \quad - \frac{\lambda}{2} \int_D u(x)^2 \D x+ \int_D u(x) p^t(x,\omega)\D x\\
& \quad = t \lambda \int_D u(x) \tu(x) \D x + t^2 \frac{\lambda}{2}\int_D \tu(x)^2 \D x -  t \int_D \tu(x) p^t(x,\omega) \D x.
\end{aligned}
\end{equation}
Dividing \eqref{eq:diffquotient} by $t \neq 0$ and passing to the limit, and using the fact that $p^t \rightarrow p^0$ in $H^1(D)$ by Lemma~\ref{lem:strongconvergenceAvgAdjoint},
$$d_u J (u,\omega)[\tu] = \lim_{t \rightarrow 0} \frac{J(u+t\tu,\omega)-J(u, \omega)}{t} =  \int_D (\lambda u(x) - p^0(x,\omega)) \tu(x) \D x,$$
where $p^0$ solves the problem
\begin{equation*}
 \int_D \left(y^0(x,\omega) - y_D(x) \right) v(x) \D x+ \int_D a(x,\omega) \nabla v(x) \cdot \nabla p(x,\omega) \D x = 0 \quad \forall v \in H_0^1(D),
 \end{equation*}
which is the same as \eqref{eq:Poissonrandom-adjoint} with $y=y^0$. The $L^2$-stochastic gradient $\nabla_u J (u,\omega)=\lambda u - p^0$ is the Riesz representation of $d_u J (u,\omega)[\tu].$
\end{proof}

\begin{proof}[Proof of Lemma~\ref{lemma:poissonrandom-adjoint}]
With the same arguments as in the proof for Lemma \ref{lemma:poissonrandom}, the existence and uniqueness of a solution $p(\cdot,\omega)$ to \eqref{eq:Poissonrandom-adjoint} can be established using the Lax-Milgram Lemma. Then inequality \eqref{eq:Aprioribounds_Poisson_stochastic_adjoint} follows from
\begin{align*}
\lVert p(\cdot, \omega) \rVert_{L^2(D)}^2 \leq C_p^2 |p(\cdot, \omega)|_{H_0^1(D)}^2 \leq {\frac{C_p^2}{a_{\min}}} \lVert y_D - y(\cdot,\omega) \rVert_{L^2(D)} \lVert p(\cdot,\omega) \rVert_{L^2(D)}
\end{align*}
with $C_2:=\frac{C_p^2}{a_{\min}}.$
\end{proof}

\bibliographystyle{siamplain}
\bibliography{references}
\end{document}